\newtheorem{theorem}{Theorem}[section]
\newtheorem{lemma}[theorem]{Lemma}
\newtheorem{corollary}[theorem]{Corollary}
\newtheorem{exAux}[theorem]{Example}
\newtheorem{Def}[theorem]{Definition}
\newenvironment{defi}{\begin{Def} \rm}{\end{Def}}
\newtheorem{Note}[theorem]{Note}
\newenvironment{note}{\begin{Note} \rm}{\end{Note}} 
\newtheorem{Problem}[theorem]{Problem}
\newtheorem{Rem}[theorem]{Remark}
\newtheorem{Not}[theorem]{Notation}
\newtheorem{Conj}[theorem]{Conjecture}
\newtheorem{Ass}[theorem]{Assumption}
\newenvironment{proof}{\medskip\noindent{\bf Proof.\ }}{\qed\medskip}
\newcommand{\qed}{\hfill\mbox{$\Box$\qquad\qquad}}
\newcommand{\C}{\mathbb{C}}
\newcommand{\R}{\mathbb{R}}
\newcommand{\Mat}{\text{\rm Mat}}
\renewcommand{\th}{\theta}
\newcommand{\N}{{\mathcal N}}
\renewcommand{\sl}{\mathfrak{sl}_2}
\newif\ifDRAFT
\begin{document}

\title{The nucleus of the Johnson graph $J(N,D)$}

\author{Kazumasa Nomura and Paul Terwilliger}

\maketitle

{\bf Abstract}.
This paper is about the nucleus of the Johnson graph $\Gamma = J(N,D)$
 with  $N > 2D$.
The nucleus is described as follows.
Let $X$ denote the vertex set of $\Gamma$.
Let $A \in \Mat_X(\C)$ denote the adjacency matrix of $\Gamma$.
Let $\{E_i\}_{i=0}^D$ denote the $Q$-polynomial ordering of the primitive idempotents of $A$.
Fix $x \in X$.
The corresponding dual adjacency matrix
$A^*$ is the diagonal matrix  in $\Mat_X(\C)$ such that for $y \in X$ the $(y,y)$-entry of $A^*$
is equal to the $(x,y)$-entry of $|X| E_1$.
For $0 \leq i \leq D$ the diagonal matrix $E^*_i \in \Mat_X(\C)$ is the projection
onto the $i$th subconstituent of $\Gamma$ with respect to $x$.
The matrices $\{E^*_i\}_{i=0}^D$ are the primitive idempotents of $A^*$.
The subalgebra $T$ of $\Mat_X(\C)$ generated by $A$, $A^*$ is called the
subconstituent algebra of $\Gamma$ with respect to $x$.
Let $V=\C^X$ denote the standard module of $\Gamma$.
For $0 \leq i \leq D$ define
\[
 \N_i = (E^*_0 V + E^*_1 V + \cdots + E^*_i V) \cap
          (E_0 V + E_1 V + \cdots + E_{D-i} V).
\]
It is known  that the sum $\N = \sum_{i=0}^D \N_i$ is direct,  and
$\N$ is a $T$-module.
This $T$-module $\N$ is  called the nucleus of $\Gamma$ with respect to $x$.
The sum $\N = \sum_{i=0}^D E^*_i \N$  is direct.
In this paper we consider the following vectors in $V$.
For a subset $\alpha \subseteq x$ let $\alpha^\vee \in V$ denote the characteristic vector
of the set of vertices in $X$ that contain $\alpha$.
Let $\alpha^\N  \in V$ denote the characteristic vector
of the set of vertices in $X$ whose intersection with $x$ is equal to $\alpha$.
We show that
(i)
the vectors $\{\alpha^\vee\}_{\alpha \subseteq x}$ form a basis of $\N$;
(ii)
the vectors $\{\alpha^\N\} _{\alpha \subseteq x}$ form a basis of $\N$;
 (iii)
for $0 \leq i \leq D$,
the vectors $\{ \alpha^\N \;|\; \alpha \subseteq x, \; |\alpha| = D-i\}$
 form a basis of $E^*_i \N$;
 (iv)
for $0 \leq i \leq D$,
the vectors $\{ \alpha^\vee \;|\; \alpha \subseteq x, \; |\alpha| = D-i\}$
 form a basis of $\N_i$.
We give the transition matrices between the bases  
$\{\alpha^\vee\}_{\alpha \subseteq x}$ and 
$\{\alpha^\N\}_{\alpha \subseteq x}$.
We  give the actions of $A$, $A^*$ on the bases 
$\{\alpha^\vee\}_{\alpha \subseteq x}$ and
$\{\alpha^\N\}_{\alpha \subseteq x}$.
For $0 \leq i \leq D$ we characterize the basis vectors for $E^*_i \N$
in terms of the connected components of a certain graph 
obtained by adjusting the edges of  the subgraph of $\Gamma$ induced on
the $i$th subconstituent of $\Gamma$ with respect to $x$.
\bigskip

\section{Introduction}
\label{sec:intro}
\ifDRAFT {\rm sec:intro}. \fi

In \cite{T:nuc} the second author introduced the nucleus of a $Q$-polynomial
distance-regular graph.
In this paper we describe the nucleus of the Johnson graph $J(N,D)$.

Before we summarize our results, we describe the context and history.
Let $\Gamma$ denote  a distance-regular graph with vertex set $X$ and diameter $D$,
see  \cite[Section 4.1.A]{BCN}.
Let $\partial$ denote the path-length distance function.
Let $A \in \Mat_X(\C)$  denote the adjacency matrix  of $\Gamma$,
see \cite[p.\ $2$]{BBIT}.
Let  $\{E_i\}_{i=0}^D$ denote a $Q$-polynomial ordering of the primitive idempotents
 of $A$,
see  \cite[Section 2]{T:notes},    \cite[Definition 2.80]{BBIT}.
Fix a vertex $x \in X$.
For $0 \leq i \leq D$, the set $\Gamma_i(x) = \{ y \in X \;|\; \partial(x,y) = i\}$ is called the
$i$th subconstituent of $\Gamma$ with respect to $x$.
The corresponding dual primitive idempotent is the diagonal matrix $E^*_i \in \Mat_X(\C)$
such that for  $y \in X$ the $(y,y)$-entry of $E^*_i$ is $1$ if $y \in \Gamma_i(x)$ and $0$ if
 $y \not\in \Gamma_i(x)$.
Define a diagonal matrix $A^* \in \Mat_X(\C)$ such that 
for $y \in X$ the $(y,y)$-entry of $A^*$ is equal to the $(x,y)$-entry of $|X| E_1$.
The matrices $\{E^*_i\}_{i=0}^D$ are the primitive idempotents of $A^*$, see \cite[Section 3]{T:subconst1}.
The subalgebra $T$ of $\Mat_X(\C)$ generated by $A$, $A^*$
is called the subconstituent algebra of $\Gamma$ with respect to $x$, see \cite[Section 3]{T:subconst1}.
Let $V = \C^X$ denote the vector space over $\C$ consisting of the column vectors with coordinates
indexed by $X$ and all entires in $\C$.
For $y \in X$ the vector $\widehat{y} \in V$ has $y$-coordinate $1$ and all other coordinates $0$.
The vectors $\{\widehat{y}\}_{y \in X}$ form a basis of $V$.
For $0 \leq i  \leq D$ define a subspace $\N_i$ of $V$ by
\[
 \N_i =
  (E^*_0 V + E^*_1 V + \cdots + E^*_i V)  \cap
  (E_0 V + E_1 V + \cdots + E_{D-i}V).
\]
By \cite[Corollay 5.8]{T:disp} the sum $\N = \sum_{i=0}^D \N_i$ is direct.
By \cite[Theorem 7.1]{T:disp} $\N$ is a $T$-module.
This $T$-module $\N$ is the nucleus of $\Gamma$ with respect to $x$.
In \cite{T:nuc} the nucleus is described for   the hypercubes, the Odd graphs, the Hamming graphs,
and the nonbipartite dual polar graphs.

In this paper we consider the Johnson graph $J(N,D)$.
This graph is defined as follows.
Fix nonnegative integers $N$, $D$ such that $2D < N$.
Let $\Omega$ denote a set of size $N$.
Let the set $X$ consist of the
subsets of $\Omega$ that have size $D$.
The graph $J(N, D)$ has vertex set $X$. Vertices $x,y \in X$ are
adjacent whenever $|x \cap y | = D-1$.
By \cite[Theorem 9.1.2]{BCN} $J(N, D)$ is distance-regular with diameter $D$.
The adjacency matrix 
$A$ has eigenvalues
\[
\th_i = (D-i)(N-D-i) - i \qquad\qquad (0 \leq i \leq D).
\]
For $0 \leq i \leq D$ let $E_i$ denote the primitive idempotent of $A$ associated with $\th_i$.
The ordering $\{E_i\}_{i=0}^D$ 
is $Q$-polynomial, see \cite[Corollary to Theorem 2.6]{BI}.
Recall the fixed vertex $x \in X$.
We consider the following vectors in $V$.
For a subset $\alpha \subseteq x$, define
\[
\alpha^\vee =
\sum_{\text{\scriptsize $\begin{matrix} y \in X \\ \alpha \subseteq y \end{matrix}$}} \widehat{y},
\qquad\qquad\qquad
\alpha^\N =
\sum_{\text{\scriptsize $\begin{matrix} y \in X \\ y \cap x = \alpha \end{matrix}$}} \widehat{y}.
\]
The vector $\alpha^\vee$ is the characteristic vector
of the set of vertices in $X$ that contain $\alpha$,
and  $\alpha^\N$ is  the characteristic vector
of the set of vertices in $X$ whose intersection with $x$ is equal to $\alpha$.
We show the following:
\begin{itemize}
\item[(i)]
the vectors $\{\alpha^\vee\}_{\alpha \subseteq x}$ form a basis of $\N$;
\item[(ii)]
the vectors $\{\alpha^\N\}_{\alpha \subseteq x}$ form a basis of $\N$;
\item[\rm (iii)]
for $0 \leq i \leq D$,
the vectors $\{ \alpha^\vee \;|\; \alpha \subseteq x, \; |\alpha| = D-i\}$
 form a basis of $\N_i$;
\item[\rm (iv)]
for $0 \leq i \leq D$,
the vectors $\{ \alpha^\N \;|\; \alpha \subseteq x, \; |\alpha| = D-i\}$
 form a basis of $E^*_i \N$.
\end{itemize}
We give the transition matrices between the bases 
 $\{\alpha^\vee\}_{\alpha \subseteq x}$ and
$\{\alpha^\N\}_{\alpha \subseteq x}$.
We  give the actions of $A$, $A^*$ on the bases 
$\{\alpha^\vee\}_{\alpha \subseteq x}$ and
$\{\alpha^\N\}_{\alpha \subseteq x}$.
We  characterize the vectors $\{\alpha^\N\}_{\alpha \subseteq x}$
as follows.
For $0 \leq i \leq D$ we show that the subgraph $\Gamma_i(x)$ is isomorphic to the Cartesian product
 $J(D,i) \times J(N-D,i)$.
So $\Gamma_i(x)$ has two types of edges, some come from $J(D,i)$ and some come from
$J(N-D,i)$.
Consider a new graph obtained from $\Gamma_i(x)$ by removing the edges that come from $J(D,i)$.
The vectors  $\{ \alpha^\N \;|\; \alpha \subseteq x, \; |\alpha| = D-i\}$ are the characteristic
vectors of the connected components of this new graph.

This paper is organized as follows.
 Section \ref{sec:pre} contains some preliminaries.
In Section \ref{sec:DRG} we review $Q$-polynomial distance-regular graphs
and the subconstituent algebra $T$.
In Section \ref{sec:Tmodule}
we review $T$-modules.
In Section \ref{sec:nuc} we recall the notion of the nucleus.
In Sections \ref{sec:Johnson}--\ref{sec:main} we consider the Johnson graph.
In Section \ref{sec:Johnson}  we review the definition and basic properties of the Johnson graph $J(N,D)$.
In Section \ref{sec:Gammai} we investigate the structure of the subgraph $\Gamma_i(x)$.
In Section \ref{sec:alphaN}
we introduce the vectors $\alpha^\N$ and $\alpha^\vee$, and give relations
between these vectors.
In Section \ref{sec:action}
we describe the action of $A$, $A^*$ on the vectors  $\alpha^\N$ and $\alpha^\vee$.
In Section \ref{sec:alphavee}
we prove some results concerning $\alpha^\vee$.
In Section \ref{sec:main} 
we use the vectors $\alpha^\N$ and $\alpha^\vee$ to obtain two bases for the nucleus.

\section{Preliminaries}
\label{sec:pre}
\ifDRAFT {\rm sec:pre}. \fi

We now begin our formal argument.
The following notation and concepts will be used throughout the paper.
Let $\C$  denote the  field of complex numbers.
Let $X$ denote a finite nonempty set.
Let $\Mat_X(\C)$ denote the $\C$-algebra consisting of the matrices with
rows and columns  indexed by $X$ and all entries in $\C$.
The identity matrix in $\Mat_X(\C)$ is denoted by $I$,
and let $J \in \Mat_X(\C)$ have all entries $1$.
Let $\C^X$ denote the vector space over $\C$ consisting of the column vectors
with  coordinates indexed by $X$ and all entries  in $\C$.
The algebra $\Mat_X(\C)$ acts on $\C^X$ by left multiplication.
We call $\C^X$ the {\em standard module}.
For a subset $Y \subseteq X$, the vector
$\widehat{Y} = \sum_{y \in Y} \widehat{y}$ is called the {\em characteristic vector of $Y$}.

All the graphs we discuss are assumed to be finite, undirected, and have no loops and no multiple edges.
Let $\Gamma$ denote a graph with vertex set $X$.
Then $\Gamma$ is said to be {\em regular with valency $k$} whenever each vertex  of $\Gamma$ is adjacent to
exactly $k$ vertices.
Define $A \in \Mat_X(\C)$ that has $(x,y)$-entry
\[
  A_{x,y} = 
 \begin{cases}
   1 & \text{\rm if  $x$, $y$ are adjacent},
\\
  0 & \text{\rm if  $x$, $y$ are not adjacent}
\end{cases}
\qquad\qquad
(x,y \in X).
\]
We call $A$ the {\em adjacency matrix of $\Gamma$}.
We recall  the Cartesian product of graphs.  
Given graphs $\Gamma$ and $\Gamma'$ with vertex set $X$ and $X'$,
the Cartesian product  $\Gamma \times \Gamma'$ is the graph with vertex set $X \times X'$
and adjacency defined as follows.
Two vertices $(x, x')$, $ (y, y')$ are adjacent in $\Gamma \times \Gamma'$ whenever 
either 
(i) $x$, $y$ are adjacent in $\Gamma$ and  $x' = y'$
or
(ii) $x = y$ and $x'$, $y'$ are adjacent in $\Gamma'$.
In case (i) (resp.\ (ii)) we say that  the edge $(x, x'), (y, y')$ in $\Gamma \times \Gamma'$
{\em comes from  $\Gamma$} (resp.\ {\em comes from $\Gamma'$}).
We make an observation.

\begin{lemma}   \label{lem:cartesian}   \samepage
\ifDRAFT {\rm lem:cartesian}. \fi
Let $\Gamma$ and $\Gamma'$ denote connected graphs with vertex set $X$ and $X'$,
respectively.
Consider a new graph obtained from $\Gamma \times \Gamma'$ by removing the
edges that come from $\Gamma$.
Then the connected components of the new graph are
\[
  \{x\} \times X', \qquad\qquad  x \in X.
\]
In particular, every connected component of the new graph is isomorphic to $\Gamma'$,
and the number of connected components is $|X|$.
\end{lemma}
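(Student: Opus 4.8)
The plan is to verify directly that the removal of the $\Gamma$-edges decomposes the vertex set $X \times X'$ into the announced pieces, and that each piece is indeed a connected component isomorphic to $\Gamma'$. First I would fix $x \in X$ and examine the new graph restricted to the set $\{x\} \times X'$. By the definition of the Cartesian product, the edges within $\{x\} \times X'$ are exactly those of type (ii), namely edges $(x,x'),(x,y')$ where $x'$ and $y'$ are adjacent in $\Gamma'$; these are precisely the edges that come from $\Gamma'$, so none of them is removed. Hence the subgraph of the new graph induced on $\{x\} \times X'$ is isomorphic to $\Gamma'$ via the map $x' \mapsto (x,x')$, and since $\Gamma'$ is connected, this subgraph is connected.

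Next I would argue that the sets $\{x\} \times X'$ (as $x$ ranges over $X$) are mutually disconnected from one another in the new graph. In the original graph $\Gamma \times \Gamma'$, any edge joining $\{x\} \times X'$ to $\{y\} \times X'$ with $x \neq y$ must be of type (i): it has the form $(x,x'),(y,x')$ with $x,y$ adjacent in $\Gamma$ and equal second coordinate. But every type (i) edge comes from $\Gamma$, so all such edges are removed. Therefore in the new graph there are no edges between distinct sets $\{x\} \times X'$ and $\{y\} \times X'$.

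Combining these two observations gives the result: each $\{x\} \times X'$ induces a connected subgraph, and there are no edges of the new graph leaving it, so each $\{x\} \times X'$ is a connected component. These sets partition $X \times X'$ as $x$ ranges over $X$, so they exhaust the components. Each component is isomorphic to $\Gamma'$ by the map noted above, and the number of components equals $|X|$. I do not anticipate a serious obstacle here; the only point requiring care is the bookkeeping that type (i) edges are exactly the $\Gamma$-edges and type (ii) edges are exactly the $\Gamma'$-edges, which is immediate from the definition recalled just before the lemma, together with the observation that this classification of edges by type is what makes the connectedness of $\Gamma'$ the relevant hypothesis.
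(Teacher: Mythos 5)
Your proof is correct and complete; it is the routine verification that the paper itself omits (the lemma is stated there as an observation without proof). The two key points --- that the surviving edges are exactly the type (ii) edges, which stay within a single fiber $\{x\}\times X'$ and induce a copy of $\Gamma'$, and that every edge between distinct fibers is of type (i) and hence removed --- are exactly what is needed, and you correctly note that only the connectedness of $\Gamma'$ is actually used.
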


\begin{defi}    \samepage
For an integer $n \geq 0$ and a set $Y$ of size at least $n$, let the
set $\binom{Y}{n}$ consist of the subsets of $Y$ that  have size $n$.
Thus
\[
  \binom{Y}{n} = \{y \;|\; y \subseteq Y, \; |y| = n\}.
\]
\end{defi}

\begin{defi}   \samepage
For sets $Y, Z$, let the  set $Y \setminus Z$ consist of
the elements in $Y$ that are not in $Z$.
Thus
\[
Y \setminus Z = \{ y \in Y \;|\; y \not\in Z\}.
\]
\end{defi}

\section{$Q$-polynomial  distance-regular graphs}
\label{sec:DRG}
\ifDRAFT {\rm sec:DRG}. \fi

In this section we recall $Q$-polynomial distance-regular graphs and the
subconstituent algebra.
Let $\Gamma$ denote a connected graph with vertex set $X$.
For $x,y \in X$, let $\partial(x,y)$ denote the path-length distance between $x$, $y$.
The integer
\[
D = \text{max} \{\partial (x,y) \;|\; x,y \in X\}
\]
is called the {\em diameter of $\Gamma$}.
For $x \in X$ and $0 \leq i \leq D$ define
\[
 \Gamma_i (x) =\{ y \in X \;|\; \partial(x,y) = i \}.
\]
The graph $\Gamma$ is said to be {\em distance-regular}
whenever for $0 \leq h,i,j \leq D$ and $x,y \in X$ at distance $\partial (x,y) = h$,
the number
\[
 p^h_{i,j}  = |\Gamma_i (x) \cap \Gamma_j(y)|
\]
is a constant that is independent of $x, y$.
The parameters $p^h_{i,j}$ are called the {\em intersection numbers} of $\Gamma$.

For the rest of this paper, we assume that $\Gamma$ is distance-regular.
Define
\[
  k_i = p^0_{i,i}  \qquad \qquad (0 \leq i \leq D).
\]
For $x \in X$ we have $|\Gamma_i(x) | = k_i$ for $0 \leq i \leq D$.
The graph $\Gamma$ is regular,
with  valency $k=k_1$ if $D \geq 1$ and  $k=0$ if $D=0$.
By the triangle inequality, the following hold for $0 \leq h,i,j  \leq D$:
\begin{itemize}
\item[\rm (i)]
$p^h_{i,j} = 0$ if one of $h,i,j$ is greater than the sum of the other two;
\item[\rm (ii)]
$p^h_{i,j} \neq  0$ if one of $h,i,j$ is equal to the sum of the other two.
\end{itemize}
By \cite[Lammma 4.1.7]{BCN} all the intersection numbers are determined by
the following intersection numbers:
\begin{align*}
c_i &= p^i_{1, i-1} \; (1 \leq i \leq D),
&
a_i &= p^i_{1,i} \; (0 \leq i \leq D),
&
b_i &= p^i_{1,i+1} \; (0 \leq i \leq D-1).
\end{align*}
We have
\[
c_i + a_i + b_i = k    \qquad\qquad (0 \leq i \leq D),
\]
where $c_0 = 0$ and $b_D = 0$.
We recall the Bose-Mesner algebra of $\Gamma$.
For $0 \leq i \leq D$, define $A_i \in \Mat_X(\C)$ that has $(x,y)$-entry
\[
 (A_i)_{x,y} =
   \begin{cases}
    1  &  \text{ if $\partial(x,y) = i$ }, \\
    0  &   \text{ if $\partial(x,y) \neq i$ }
  \end{cases}
  \qquad\qquad  (x,y \in X).
\]
We have
(i) $A_0 = I$;
(ii) $J = \sum_{i=0}^D A_i$;
(iii) $A^t_i = A_i$ $(0 \leq i \leq D)$;
(iv) $\overline{A_i} = A_i$  $(0 \leq i \leq D)$;
(v) $A_i A_j = A_j A_i = \sum_{h=0}^D p^h_{i,j} A_h$ $(0 \leq i,j \leq D)$.
Therefore, the matrices $\{A_i\}_{i=0}^D$ form a basis for a
commutative subalgebra $M$ of $\Mat_X(\C)$,
called the {\em Bose-Mesner algebra of $\Gamma$}.
The adjacency matrix $A$ of $\Gamma$ satisfies  $A=A_1$ if $D \geq 1$ and $A=0$ if $D=0$.
By \cite[Corollary 3.4]{T:monster} $A$ generates $M$.
Since $A$ is symmetric with real entries, $A$ is diagonalizable over $\R$.
Consequently  $M$ has a second basis $\{E_i\}_{i=0}^D$ such that
(i) $E_0  = |X|^{-1} J$;
(ii) $I = \sum_{i=0}^D E_i$;
(iii) $E^t_i = E_i \;\; ( 0 \leq i \leq D)$;
(iv) $\overline{E_i} = E_i \;\; ( 0 \leq i \leq D)$;
(v) $E_i E_j = \delta_{i,j} E_i \;\; ( 0 \leq i,j \leq D)$.
We call $\{E_i\}_{i=0}^D$ the {\em primitive idempotents} of $\Gamma$.
We abbreviate  $V = \C^X$.
By construction,
\begin{equation}
  V = \sum_{i=0}^D E_i V  \qquad\qquad (\text{direct sum}).   \label{eq:sumEiV}
\end{equation}
The summands in \eqref{eq:sumEiV} are the eigenspaces of $A$.  
For $  0 \leq i \leq D$ let $\th_i$ denote the eigenvalue of $A$ for $E_i V$.
We have
$A E_i = \th_i E_i =  E_i A$ $(0 \leq i \leq D)$
and $A = \sum_{i=0}^D \th_i E_i$.
By \cite[p 128]{BCN} we have $\th_0 = k$.
We define some polynomials in an indeterminate $\lambda$.
For $0 \leq i \leq D$ define
\begin{align*}
\tau_i(\lambda) 
&= (\lambda - \th_0) (\lambda - \th_1) \cdots ( \lambda - \th_{i-1}),
\\
\eta_i (\lambda) &=
(\lambda - \th_D)(\lambda - \th_{D-1}) \cdots (\lambda - \th_{D-i+1}).
\end{align*}
For $0 \leq r \leq D$ we have
\begin{equation}
E_r =
\prod_{\text{\scriptsize $\begin{matrix} 0 \leq i \leq D \\ i \neq r \end{matrix}$ }}
        \frac{A- \th_i I}{\th_r- \th_i}
=
\frac{ \tau_r (A) \eta_{D-r}(A) } { \tau_r(\th_r) \eta_{D-r}(\th_r) }.
   \label{eq:Er}
\end{equation}

Next we recall the Krein parameters of $\Gamma$.
We have $A_i \circ A_j = \delta_{i,j} A_i \;\; (0 \leq i,j \leq D)$,
where $\circ$ denotes the entry-wise multiplication.
Therefore, $M$ is closed under $\circ$.
Consequently,
there exist scalars $q^h_{i,j} \in \C$ $(0 \leq h,i,j \leq D)$ such that
\[
E_i \circ E_j = |X|^{-1} \sum_{h=0}^D q^h_{i,j} E_h
 \qquad \qquad (0 \leq i,j \leq D).
\]
The scalars $q^h_{i,j}$ are called the {\em Krein parameters} of $\Gamma$.
By \cite[p.\ 69]{BI} the Krein parameters  are non-negative real numbers.

Next we recall the $Q$-polynomial property.
The graph $\Gamma$ is said to be {\em $Q$-polynomial} (with respect to the ordering $\{E_i\}_{i=0}^D$)
whenever the following hold for $0 \leq h,i,j \leq D$:
\begin{itemize}
\item[\rm (i)]
$q^h_{i,j} = 0$ if one of $h,i,j$ is greater than the sum of the other two;
\item[\rm (ii)]
$q^h_{i,j} \neq 0$ if one of $h,i,j$ is equal the sum of the other two.
\end{itemize}
For the rest of this paper, we assume that $\Gamma$ is $Q$-polynomial 
with respect to the ordering $\{E_i\}_{i=0}^D$.

Next we recall the dual Bose-Mesner algebra of $\Gamma$.
Fix a vertex $x \in X$.
For $0 \leq i \leq D$, define a diagonal matrix $E^*_i = E^*_i (x) \in \Mat_X(\C)$ that 
has $(y,y)$-entry
\[
  (E^*_i)_{y,y} =
   \begin{cases}
    1  &  \text{ if $\partial(x,y) = i$}, \\
    0  &   \text{ if $\partial(x,y) \neq i$ }
  \end{cases}
  \qquad\qquad  ( y \in X).
\]
We have
(i) $I = \sum_{i=0}^D E^*_i$;
(ii) $E^*_i E^*_j = \delta_{i,j}E^*_i \;\; (0 \leq i,j \leq D)$.
Therefore the matrices $\{E^*_i\}_{i=0}^D$ form a basis for a commutative subalgebra  $M^* = M^*(x)$
of $\Mat_X(\C)$.
We call  $M^*$ the {\em dual Bose-Mesner algebra of $\Gamma$ with respect to $x$}.
We call $\{E^*_i\}_{i=0}^D$ the {\em dual primitive idempotents of $\Gamma$ with respect to $x$}.
We have
\begin{equation}
 V = \sum_{i=0}^D E^*_i V  \qquad\qquad (\text{direct sum}).    \label{eq:Vdecomp2}
\end{equation}
Define a matrix $A^* = A^*(x) \in \Mat_X(\C)$ as follows.
If $D=0$ then $A^*=0$.
If $D \geq 1$ then
 $A^*$ is diagonal with $(y,y)$-entry
\[
   (A^*)_{y,y} = |X| (E_1)_{x,y}    \qquad\qquad (y \in X).
\]
The matrix $A^*$ is called the {\em dual adjacency matrix of $\Gamma$ with respect to $x$}.
By \cite[p.\ 279]{T:subconst1},
there exist scalars $\th^*_i \in \C$ $(0 \leq i \leq D)$ such that
\[
  A^* = \sum_{i=0}^D \th^*_i E^*_i.
\]
By \cite[Lemma 3.11]{T:subconst1},  $\{\th^*_i\}_{i=0}^D$ are mutually distinct.
For $0 \leq i \leq D$, $\th^*_i$ is the eigenvalue of $A^*$ for the eigenspace  $E^*_i V$,
and
$E^*_i A^* = A^* E^*_i = \th^*_i E^*_i$.
The scalars $\{\th^*_i\}_{i=0}^D$ are called the {\em dual eigenvalues} of $\Gamma$.
By \cite[Corollary 11.6]{T:monster},
$A^*$ generates $M^*$. 
Let $T=T(x)$ denote the subalgebra of  $\Mat_X(\C)$ generated by $A$ and $A^*$.
The algebra $T$ is called the {\em subconstituent algebra of $\Gamma$
with respect to $x$}.

\section{$T$-modules}
\label{sec:Tmodule}
\ifDRAFT {\rm sec:Tmodule}. \fi

We continue to discuss the $Q$-polynomial distance-regular graph $\Gamma$
with vertex set $X$.
Fix $x \in X$ and write $T=T(x)$.
Recall the standard module $V = \C^X$.
In this section we discuss the $T$-modules.

By a {\em $T$-module} we mean a subspace $W$ of $V$ such that $T W \subseteq W$.
We recall the notion of isomorphism for $T$-modules.
Let $W$, $W'$ denote $T$-modules.
By a {\em $T$-module isomorphism from $W$ to $W'$}
we mean a $\C$-linear bijection $\sigma : W \to W'$
such that  $\sigma B = B \sigma$ for all $B \in T$.
The $T$-modules $W$ and $W'$ are said to be {\em isomorphic}
whenever there exists a $T$-module isomorphism from $W$ to $W'$.

A $T$-module $W$ is said to be {\em irreducible} whenever
$W \neq 0$ and $W$ does not contain a $T$-submodule besides $0$ and $W$.      
Let $W$ denote an irreducible $T$-module.
By the {\em endpoint of $W$} we mean
\[
   \text{\rm min} \{ i \;|\; 0 \leq i \leq D, \; E^*_i W \neq 0 \}.
\]
By the {\em dual endpoint of $W$} we mean
\[
   \text{\rm min} \{ i \;|\; 0 \leq i \leq D, \; E_i W \neq 0 \}.
\]
By \cite[Lemma 5.6]{T:nuc},
\begin{equation}
  | \{ i \;|\, 0 \leq i \leq D, \;  E_i W \neq 0\} |
=   | \{ i \;|\, 0 \leq i \leq D, \;  E^*_i W \neq 0\} |.        \label{eq:diam}
\end{equation}
By the {\em diameter of $W$} we mean the common value in \eqref{eq:diam} minus $1$.

\begin{lemma}   {\rm (See \cite[Lemma 5.4]{T:nuc}.) }
\samepage
Let $W$ denote an irreducible $T$-module,
with endpoint $r$, dual endpoint $t$, and diameter $d$.
Then for $0 \leq i \leq D$ the following hold:
\begin{itemize}
\item[\rm (i)]
$E_i W \neq 0$ if and only if $t \leq i \leq t+d$;
\item[\rm (ii)]
$E^*_i W \neq 0$ if and only if $r \leq i \leq r+d$.
\end{itemize}
\end{lemma}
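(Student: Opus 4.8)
The plan is to show that each of the two index sets $\{i \;|\; E_i W \neq 0\}$ and $\{i \;|\; E^*_i W \neq 0\}$ consists of consecutive integers, and then to pin down their lengths using \eqref{eq:diam}. The engine behind this is a pair of tridiagonal relations. First I would record the \emph{primal} relation $E^*_i A E^*_j = 0$ whenever $|i-j| > 1$: a $(y,z)$-entry of $E^*_i A E^*_j$ can be nonzero only when $\partial(x,y)=i$, $\partial(x,z)=j$, and $\partial(y,z)=1$, and then the triangle inequality forces $|i-j| \leq 1$. Dually, I would invoke the $Q$-polynomial assumption in the form of the \emph{dual} relation $E_i A^* E_j = 0$ whenever $|i-j|>1$; this is the standard Terwilliger-algebra reformulation of $Q$-polynomiality (\cite[Section 3]{T:subconst1}), and it is the one substantive input rather than a triangle-inequality bookkeeping.

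Next I would prove the consecutiveness of $R = \{i \;|\; E^*_i W \neq 0\}$. Since $A^*$ generates $M^*$ we have $E^*_i \in M^* \subseteq T$, so each $E^*_i W \subseteq W$ and $W = \sum_{i=0}^D E^*_i W$ (direct sum). Suppose, toward a contradiction, that $R$ has an internal gap: some $m$ with $r < m < \max R$ and $E^*_m W = 0$. Set $U = \sum_{i<m} E^*_i W$. Each $E^*_i W$ is an eigenspace of $A^*$, so $U$ is $A^*$-invariant; and the primal tridiagonal relation gives $A E^*_i W \subseteq E^*_{i-1} W + E^*_i W + E^*_{i+1} W$, which lies in $U$ for every $i < m$ once one uses $E^*_m W = 0$ to kill the boundary term at $i = m-1$. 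Hence $U$ is $T$-invariant. It is nonzero (it contains $E^*_r W$, as $r < m$) and proper (since $E^*_{\max R} W \not\subseteq U$), contradicting the irreducibility of $W$. Therefore $R$ has no internal gaps, so $R = \{r, r+1, \ldots, \max R\}$, with $r$ the endpoint.

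The same argument with the roles of $A,\,E_i$ and $A^*,\,E^*_i$ interchanged — using $E_i \in M \subseteq T$, the decomposition $W = \sum_i E_i W$ (direct sum), and the dual tridiagonal relation in place of the primal one — shows that $\{i \;|\; E_i W \neq 0\} = \{t, t+1, \ldots\}$ is likewise an interval starting at the dual endpoint $t$. Finally I would compare cardinalities: by \eqref{eq:diam} the two intervals have the same size, and by the definition of the diameter that common size equals $d+1$. Hence $\{i \;|\; E^*_i W \neq 0\} = \{r, r+1, \ldots, r+d\}$ and $\{i \;|\; E_i W \neq 0\} = \{t, t+1, \ldots, t+d\}$, which are exactly statements (ii) and (i). The main obstacle is the dual relation $E_i A^* E_j = 0$ for $|i-j|>1$: the primal relation and the no-gap argument are essentially formal, but the dual relation is precisely where $Q$-polynomiality is used and is a genuine theorem rather than a triangle-inequality computation, so care is needed to cite or reprove it in the present notation.
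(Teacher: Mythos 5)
Your argument is correct and complete. Note, however, that the paper does not prove this lemma at all: it is quoted verbatim from \cite[Lemma 5.4]{T:nuc}, so there is no internal proof to compare against. What you have written is the standard argument (essentially the one in the cited reference): the primal tridiagonal relation $E^*_iAE^*_j=0$ for $|i-j|>1$ from the triangle inequality, the dual relation $E_iA^*E_j=0$ for $|i-j|>1$ from the $Q$-polynomial condition (via \cite[Lemma 3.2]{T:subconst1}, since $|i-j|>1$ forces $q^1_{i,j}=0$), the no-gap argument from irreducibility, and the cardinality match \eqref{eq:diam} together with the definition of the diameter to fix both intervals at length $d+1$. You correctly isolate the dual tridiagonal relation as the one nontrivial input; everything else is formal. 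So your proposal supplies a self-contained proof where the paper merely cites one, and it is the right proof.
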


\begin{lemma} {\rm (See \cite[Lemma 5.7]{T:nuc}.) }
\label{lem:shape}   \samepage
\ifDRAFT {\rm lem:shape}. \fi
Let $W$ denote an irreducible $T$-module,
with endpoint $r$,
dual endpoint $t$,
and diameter $d$.
Then for $0 \leq i \leq d$ the following subspaces have the same dimension:
\begin{equation}
 E_{t+i} W, \qquad
 E_{t+d-i} W, \qquad
 E^*_{r+i} W, \qquad
 E^*_{r+d-i} W.
\label{eq:subspaces}
\end{equation}
\end{lemma}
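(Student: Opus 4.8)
The plan is to prove that the four subspaces have equal dimension by recognizing $(A, A^*)$, restricted to $W$, as a tridiagonal pair and extracting the dimension identities from the symmetry of its shape. First I would record the supports: by the preceding lemma, $E^*_i W \neq 0$ exactly when $r \leq i \leq r+d$, and $E_i W \neq 0$ exactly when $t \leq i \leq t+d$, so
\[
 W = \sum_{i=0}^d E^*_{r+i} W = \sum_{i=0}^d E_{t+i} W \qquad (\text{both sums direct}).
\]
Thus $\{E^*_{r+i}W\}_{i=0}^d$ is the eigenspace decomposition of $A^*$ acting on $W$, and $\{E_{t+i}W\}_{i=0}^d$ is that of $A$ acting on $W$. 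Writing $\rho_i = \dim E^*_{r+i}W$ and $\rho^\flat_i = \dim E_{t+i}W$, the lemma reduces to the three identities $\rho_i = \rho_{d-i}$, $\rho^\flat_i = \rho^\flat_{d-i}$, and $\rho_i = \rho^\flat_i$; these together make all four subspaces in \eqref{eq:subspaces} have the common dimension $\rho_i$.

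Second, I would install the tridiagonal structure. Since $\Gamma$ is distance-regular, $A = A_1$ satisfies $E^*_h A E^*_j = 0$ whenever $|h-j| \geq 2$ (triangle inequality), so $A$ acts on the grading $\{E^*_{r+i}W\}$ by raising, fixing, or lowering the index by at most one; dually, since $\Gamma$ is $Q$-polynomial, $E_h A^* E_j = 0$ whenever $|h-j| \geq 2$, so $A^*$ acts tridiagonally on $\{E_{t+i}W\}$. As $A$ and $A^*$ are real symmetric they are diagonalizable, and since $W$ is an irreducible $T$-module with $T = \langle A, A^* \rangle$, the only subspaces of $W$ invariant under both $A$ and $A^*$ are $0$ and $W$. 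These are precisely the four defining conditions of a tridiagonal pair, so $(A|_W, A^*|_W)$ is a tridiagonal pair of diameter $d$.

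Third, which is the crux, I would derive the dimension identities from the shape of this tridiagonal pair. Introduce the raising and lowering maps $R = \sum_i E^*_{i+1} A E^*_i$ and $L = \sum_i E^*_{i-1} A E^*_i$; both lie in $T$ and hence preserve $W$, with $R(E^*_{r+i}W) \subseteq E^*_{r+i+1}W$ and $L(E^*_{r+i}W) \subseteq E^*_{r+i-1}W$. The heart of the matter is to show that for $0 \leq i \leq d/2$ the iterate $R^{\,d-2i}$ restricts to a bijection $E^*_{r+i}W \to E^*_{r+d-i}W$, with $L^{\,d-2i}$ giving a bijection in the reverse direction; this forces $\rho_i = \rho_{d-i}$. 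Running the same argument with the roles of $A$ and $A^*$ interchanged yields $\rho^\flat_i = \rho^\flat_{d-i}$, and the split decomposition of the tridiagonal pair identifies $\rho_i = \rho^\flat_i$. This step is the main obstacle: it is exactly the assertion that the shape $(\rho_0, \rho_1, \dots, \rho_d)$ of a tridiagonal pair is symmetric and independent of which of the two eigenspace gradings one reads it from. I would either cite the structure theory of tridiagonal pairs for this, or carry out the sandwich argument directly, using irreducibility to upgrade the injectivity of iterated raising and lowering into the required bijections. Everything surrounding this step is bookkeeping; the symmetry and grading-independence of the shape is where the genuine content lies.
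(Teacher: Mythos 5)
The paper does not prove this lemma; it is quoted verbatim from \cite[Lemma~5.7]{T:nuc}, so there is no internal proof to compare against. Your overall strategy --- observe that $(A|_W,A^*|_W)$ is a tridiagonal pair and invoke the symmetry and grading-independence of its shape --- is exactly the provenance of the cited result, and your first two steps (the support intervals, the tridiagonal actions coming from the triangle inequality and the $Q$-polynomial condition, irreducibility supplying the last axiom) are correct and complete.

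The gap is in how you formulate the crux. You propose to prove $\rho_i=\rho_{d-i}$ by showing that $R^{\,d-2i}$, with $R=\sum_j E^*_{j+1}AE^*_j$, restricts to a bijection $E^*_{r+i}W\to E^*_{r+d-i}W$. That is a strictly stronger assertion than the lemma, and it is not what the structure theory of tridiagonal pairs provides: the known raising/lowering bijections (Ito--Tanabe--Terwilliger, ``Some algebra related to $P$- and $Q$-polynomial association schemes,'' \cite{ITT}) live on the \emph{split decomposition} $W=\sum_{j=0}^d U_j$ with $U_j=(E^*_rW+\cdots+E^*_{r+j}W)\cap(E_{t+j}W+\cdots+E_{t+d}W)$, where the raising map is $A-\theta_{t+j}I$ (not the compression $E^*_{j+1}AE^*_j$) and one shows $(A-\theta I)^{d-2j}\colon U_j\to U_{d-j}$ is a bijection. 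The eigenspace dimensions are then recovered not by exhibiting bijections between eigenspaces but by telescoping: $U_j+\cdots+U_d=E_{t+j}W+\cdots+E_{t+d}W$ forces $\dim U_j=\dim E_{t+j}W$, and $U_0+\cdots+U_j=E^*_rW+\cdots+E^*_{r+j}W$ forces $\dim U_j=\dim E^*_{r+j}W$; combined with $\dim U_j=\dim U_{d-j}$ this gives all four equalities at once. Whether your operator $R$ is even injective on $E^*_{r+i}W$ for $i<d/2$ is not something the cited theory hands you, and ``using irreducibility to upgrade injectivity'' is not a routine step here --- irreducibility is consumed in proving the split-decomposition bijections. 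So either cite \cite[Corollary~5.7]{ITT} (equivalently \cite[Lemma~5.7]{T:nuc}) for the shape statement outright, or replace your third step by the split-decomposition argument; as written, the self-contained version does not go through.
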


\begin{defi}   \label{def:shape}   \samepage
\ifDRAFT {\rm def:shape}. \fi
We refer to the irreducible $T$-module $W$ from  Lemma \ref{lem:shape}.
For $0 \leq i \leq d$ let $\rho_i$ denote the common dimension of the four subspaces
\eqref{eq:subspaces}.
By construction $\rho_i \neq 0$ and $\rho_i = \rho_{d-i}$.
We call the sequence $\{\rho_i\}_{i=0}^d$ the {\em shape of $W$}.
\end{defi}

\begin{defi}   \label{def:thin}   \samepage
\ifDRAFT {\rm def:thin}. \fi 
Let $W$ denote an irreducible $T$-module,
with diameter $d$ and shape $\{\rho_i\}_{i=0}^d$.
Then  $W$ is said to be {\em thin} whenever $\rho_i = 1$  for $0 \leq i \leq d$.
\end{defi}

\begin{lemma} {\rm (See \cite[Lemma 3.4]{T:subconst1}.)}
\label{lem:Vdecomp}    \samepage
\ifDRAFT{\rm lem:Vdecomp}. \fi
Every $T$-module is a direct sum of irreducible $T$-modules.
\end{lemma}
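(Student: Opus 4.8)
The plan is to recognize Lemma \ref{lem:Vdecomp} as an instance of Maschke's theorem for a self-adjoint matrix algebra, and to prove it by the orthogonal-complement argument. Equip the standard module $V = \C^X$ with the Hermitian inner product $\langle u, v\rangle = \overline{u}^{\,t} v$ for $u, v \in V$. For $B \in \Mat_X(\C)$ write $B^\dagger = \overline{B}^{\,t}$ for the conjugate-transpose, so that $\langle Bu, v\rangle = \langle u, B^\dagger v\rangle$ for all $u, v \in V$ and $(B_1 B_2)^\dagger = B_2^\dagger B_1^\dagger$. The whole argument rests on showing that $T$ is closed under the map $B \mapsto B^\dagger$.

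First I would check that the two generators of $T$ are fixed by $\dagger$. The adjacency matrix is $A = A_1$ (assuming $D \geq 1$; the case $D = 0$ is trivial since then $V$ is one-dimensional), and the defining properties $A_1^t = A_1$ and $\overline{A_1} = A_1$ of the Bose--Mesner algebra give $A^\dagger = A$. The dual adjacency matrix $A^*$ is diagonal, hence equal to its transpose, and its $(y,y)$-entry $|X|(E_1)_{x,y}$ is real because $E_1^t = E_1$ together with $\overline{E_1} = E_1$ forces $E_1$ to have real entries; thus $A^{*\dagger} = A^*$. Since $\dagger$ reverses products, the conjugate-transpose of any word in $A$, $A^*$ is again a word in $A$, $A^*$, and so lies in $T$; passing to $\C$-linear combinations shows that $T$ is closed under $\dagger$.

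With self-adjointness in hand, I would establish the key closure property: if $W$ is a $T$-module, then so is its orthogonal complement $W^\perp$. Indeed, for $w' \in W^\perp$, $w \in W$, and $B \in T$ we have $\langle w, Bw'\rangle = \langle B^\dagger w, w'\rangle = 0$, since $B^\dagger \in T$ gives $B^\dagger w \in W$ while $w' \perp W$; hence $Bw' \in W^\perp$. It follows that whenever $W$ is a $T$-submodule of a $T$-module $U$, the intersection $U \cap W^\perp$ is again a $T$-module and $U = W \oplus (U \cap W^\perp)$.

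Finally I would conclude by induction on $\dim U$ that every $T$-module $U$ is a direct sum of irreducible $T$-modules. If $U$ is zero or irreducible there is nothing to prove; otherwise $U$ contains a proper nonzero $T$-submodule $W$, and the decomposition $U = W \oplus (U \cap W^\perp)$ expresses $U$ as a direct sum of two $T$-modules of strictly smaller dimension, to which the induction hypothesis applies. The one substantive step, and the part I would justify most carefully, is the self-adjointness of $T$; once $A^\dagger = A$ and $A^{*\dagger} = A^*$ are in place, the remainder is the routine orthogonal-complement computation.
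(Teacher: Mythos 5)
Your proof is correct and complete. The paper gives no argument of its own for this lemma---it simply cites \cite[Lemma 3.4]{T:subconst1}---and the argument in that reference is exactly the one you give: $T$ is closed under the conjugate-transpose map because its generators $A$ and $A^*$ are real symmetric, hence the orthogonal complement of a $T$-submodule is again a $T$-submodule and complete reducibility follows by induction on dimension.
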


By Lemma \ref{lem:Vdecomp},  there exist irreducible $T$-modules $\{W_h\}_{h=1}^n$ 
such that
\begin{equation}
V = \sum_{h=1}^n W_h \qquad\qquad \text{(direct sum)}.        \label{eq:Vdecomp}
\end{equation}
Let $W$ denote an irreducible $T$-module.
By \cite[Lemma 3.3]{Cur}, the number of summands in \eqref{eq:Vdecomp} that are isomorphic to $W$
is independent of the choice of $\{W_h\}_{h=1}^n$.
This number is denoted by $\text{mult}(W)$.

\section{The nucleus}
\label{sec:nuc}
\ifDRAFT {\rm sec:nuc}. \fi

We continue to discuss the $Q$-polynomial distance-regular graph $\Gamma$ with vertex set $X$.
Fix $x \in X$ and write $T=T(x)$.
In this section we recall the nucleus of $\Gamma$ with respect to $x$.

\begin{lemma}  {\rm (See \cite[Proposition 6.3]{T:nuc}.) }
\label{lem:disp0}   \samepage
\ifDRAFT {\rm lem:disp0}. \fi
Let $W$ denote an irreducible $T$-module, with endpoint $r$, dual endpoint $t$,
and diameter $d$.
Then
\begin{equation}
 r+ t- D + d \geq 0.           \label{eq:disp}
\end{equation}
Moreover, equality holds in \eqref{eq:disp} if and only if both
$t=r$ and $d=D-2r$.
In this case $W$ is thin.
\end{lemma}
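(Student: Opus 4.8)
The plan is to run everything through the two tridiagonal relations supplied by the $Q$-polynomial structure. Since $\Gamma$ is distance-regular, $A E^*_i V \subseteq E^*_{i-1}V + E^*_i V + E^*_{i+1}V$, and since $\Gamma$ is $Q$-polynomial, $A^* E_i V \subseteq E_{i-1}V + E_i V + E_{i+1}V$; see \cite[Section 3]{T:subconst1}. Restricting to the $T$-module $W$ and combining with \cite[Lemma~5.4]{T:nuc}, the matrix $A|_W$ is diagonalizable with eigenvalues exactly $\th_t, \th_{t+1}, \ldots, \th_{t+d}$ and $A^*|_W$ with dual eigenvalues exactly $\th^*_r, \ldots, \th^*_{r+d}$, so that $(A|_W, A^*|_W)$ is a tridiagonal pair and Lemma~\ref{lem:shape} applies. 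Before starting I would record the reformulation: writing the number of $E^*$-levels lying above $W$ as $D - r - d$ and the number of $E$-levels lying below $W$ as $t$, the desired bound $r + t + d \ge D$ is precisely the assertion that $t \ge D - r - d$, equivalently that $E_j W = 0$ for $0 \le j < D - r - d$, equivalently that the minimal polynomial of $A|_W$ divides $\eta_{r+d+1}$.

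For the inequality itself I would invoke the split/displacement decomposition of $V$ developed in \cite{T:disp}. The idea is to locate the bottom corner of $W$, namely $E^*_r W \cap (E_0 V + \cdots + E_t V) = E^*_r W \cap E_t W$, which is nonzero since $E^*_r W \neq 0$ and the lowest $E$-level present in $W$ is $t$. Following this corner through the split components $\tilde V_{i,j} = (E^*_0 V + \cdots + E^*_i V) \cap (E_0 V + \cdots + E_j V)$ assigns to $W$ a displacement index equal to $r + t + d - D$, and the nonnegativity of that index, which is the content of the displacement theorem of \cite{T:disp}, yields $r + t + d - D \ge 0$. As a consistency check one can instead try to generate $W$ from a corner vector by alternately applying $A$, which fixes each $E$-level and changes the $E^*$-level by at most one, and $A^*$, which does the dual; the reachable region must fit inside the $D+1$ available levels on each axis.

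For the equality statement I would first dispose of the bookkeeping. Given $r + t + d = D$, the two stated conditions are equivalent, since $t = r$ forces $d = D - 2r$ and conversely; moreover equality forces the $E$-support $[t, t+d]$ to be the image of the $E^*$-support $[r, r+d]$ under the reflection $i \mapsto D - i$. The genuine content is to upgrade this to $r = t$, that is, to show the common support is centered about $D/2$. I expect this to be the main obstacle, because the support data together with the shape symmetry $\rho_i = \rho_{d-i}$ are invariant under sliding the two supports in opposite directions and so cannot by themselves force $r = t$; one must use that displacement zero is the \emph{minimal} value. The plan is to show that at displacement zero the corner $E^*_r W \cap E_t W$ generates $W$ and that the raising maps $E^*_i W \to E^*_{i+1} W$ (via $A$) and $E_j W \to E_{j+1} W$ (via $A^*$) cannot drop rank, so that $\rho_0 = \rho_1 = \cdots = \rho_d$; combined with $\rho_i = \rho_{d-i}$ from Lemma~\ref{lem:shape} and a dimension count at the corner, this should force every $\rho_i = 1$ and simultaneously center the support, delivering both thinness and $r = t$ at once.
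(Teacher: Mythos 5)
First, a point of reference: the paper does not prove this lemma at all --- it is imported verbatim from \cite[Proposition~6.3]{T:nuc} --- so there is no internal argument to match yours against, and your attempt must be judged on its own. For the inequality \eqref{eq:disp} you ultimately just cite the displacement theorem of \cite{T:disp}, which is legitimate (that paper does prove $0\le r+t+d-D$) but is no more than what the paper itself does. The surrounding motivation, however, contains a false claim: the ``bottom corner'' $E^*_rW\cap E_tW$ need not be nonzero, and your justification (``since $E^*_rW\neq 0$ and the lowest $E$-level present in $W$ is $t$'') is a non sequitur, as two nonzero subspaces of $W$ can intersect trivially. The primary module is already a counterexample, and a displacement-zero one at that: there $r=t=0$, $E^*_0W=\C\,\widehat{x}$ and $E_0W=\C\,\widehat{X}$, which are distinct lines whenever $D\ge 1$. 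This error is not load-bearing for the inequality (since you fall back on the citation), but it undermines the ``dimension count at the corner'' you later rely on.

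The serious gap is the equality statement and thinness. You correctly isolate the real content --- that displacement zero must force $t=r$, the converse direction being trivial arithmetic --- and correctly note that the support data together with the shape symmetry $\rho_i=\rho_{d-i}$ of Lemma~\ref{lem:shape} cannot deliver this. But your proposed resolution is an unexecuted plan: no argument is given for why the raising maps cannot drop rank at displacement zero; the corner at which you would ``count dimensions'' can vanish, as above; constancy of the $\rho_i$ does not by itself yield $\rho_i=1$; and no mechanism is offered that would ``center the support.'' The standard route is different and shorter: one uses the two level inequalities $2r+d\ge D$ and $2t+d\ge D$, known for every irreducible $T$-module of a $Q$-polynomial distance-regular graph (see \cite{Pas} and the references in \cite{T:nuc}). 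Averaging them gives \eqref{eq:disp} directly, and equality in \eqref{eq:disp} forces equality in both, whence $r=t$ and $d=D-2r$; thinness of displacement-zero modules is then a separate result of \cite{T:disp}, not a consequence of the shape symmetry. As written, your proposal establishes neither the forward implication of the equality statement nor thinness.
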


\begin{defi} {\rm (See \cite[Definition 6.6]{T:nuc}.) }
Let $W$ denote an irreducible $T$-module, with endpoint $r$, dual endpoint $t$,
and diameter $d$.
By the {\em displacement of $W$} we mean the integer
\[
   r+t-D+d.
\]
\end{defi}

\begin{defi}   {\rm (See \cite[Definition 6.8]{T:nuc}.) }
\label{def:nuc}   \samepage
\ifDRAFT {\rm def:nuc}. \fi
By the {\em nucleus of $\Gamma$ with respect to $x$} we mean the span
of the irreducible $T$-modules that have displacement $0$.
\end{defi}

\begin{lemma}   {\rm (See \cite[Proposition 6.10]{T:nuc}.) }
\label{lem:disp0a}   \samepage
\ifDRAFT {\rm lem:disp0a}. \fi
Let $W$ denote an irreducible $T$-submodule of the nucleus,
with endpoint $r$, dual endpoint $t$, and diameter $d$.
Then $W$ is thin, and 
\[
0 \leq r \leq D/2, \qquad t=r, \qquad d=D-2r.
\]
\end{lemma}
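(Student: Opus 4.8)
The plan is to reduce the whole statement to the displacement inequality of Lemma~\ref{lem:disp0}, so the crux is to show that an irreducible $T$-submodule of the nucleus necessarily has displacement $0$. The first thing I would establish is that the displacement $r+t-D+d$ of an irreducible $T$-module is an isomorphism invariant. Indeed, if $\sigma:W\to W'$ is a $T$-module isomorphism then $\sigma B=B\sigma$ for all $B\in T$; taking $B=E_i$ and $B=E^*_i$ shows that $\sigma$ restricts to bijections $E_iW\to E_iW'$ and $E^*_iW\to E^*_iW'$. Hence $W$ and $W'$ have the same endpoint, the same dual endpoint, and (via \eqref{eq:diam}) the same diameter, so the same displacement.

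Next I would set up the isotypic decomposition of $V$. By Lemma~\ref{lem:Vdecomp} the module $V$ is a direct sum of irreducible $T$-modules, so $T$ acts semisimply on $V$. For each isomorphism class $[U]$ of irreducible $T$-modules, let $V_{[U]}$ denote the sum of all irreducible $T$-submodules of $V$ isomorphic to $U$. Standard semisimple module theory (consistent with the well-definedness of $\mathrm{mult}(\cdot)$ recorded after \eqref{eq:Vdecomp}) gives $V=\bigoplus_{[U]}V_{[U]}$, and moreover every irreducible $T$-submodule of $V_{[U]}$ is isomorphic to $U$.

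Now I would locate the nucleus inside this decomposition. By Definition~\ref{def:nuc} the nucleus is spanned by the irreducible $T$-modules of displacement $0$. Each such module lies in the isotypic component of its own type, and by the invariance from the first paragraph that type has displacement $0$; conversely, any component $V_{[U]}$ with $U$ of displacement $0$ is spanned by irreducibles of displacement $0$ and hence lies in the nucleus. Therefore the nucleus equals $\bigoplus_{[U]}V_{[U]}$ taken over the displacement-$0$ types. Given an irreducible $T$-submodule $W$ of the nucleus, I compose the inclusion $W\hookrightarrow\bigoplus_{[U]}V_{[U]}$ with the coordinate projections; some projection $W\to V_{[U]}$ is nonzero, and since $W$ is irreducible this map is injective, so $W$ is isomorphic to the displacement-$0$ type $U$. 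By invariance, $W$ itself has displacement $0$.

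Finally I would invoke Lemma~\ref{lem:disp0}. The equality $r+t-D+d=0$ is precisely the equality case of \eqref{eq:disp}, which yields $t=r$, $d=D-2r$, and the thinness of $W$. The remaining bounds are immediate: $r\geq 0$ because the endpoint is a minimum over the indices $0\le i\le D$, while $d=D-2r\geq 0$ forces $r\le D/2$. I expect the only genuine obstacle to be the bookkeeping of the third paragraph—verifying that an irreducible submodule of a sum of displacement-$0$ isotypic components is again of displacement $0$; once the displacement is pinned to $0$, Lemma~\ref{lem:disp0} supplies everything else.
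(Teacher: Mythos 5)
Your argument is correct. Note, however, that the paper does not prove this lemma at all: it is imported verbatim from \cite[Proposition~6.10]{T:nuc}, so there is no internal proof to compare against. What you have written is a legitimate self-contained derivation from the facts the paper does state, and it is essentially the standard argument one would expect to find in the cited source: (a) displacement is an isomorphism invariant, since a $T$-module isomorphism commutes with each $E_i$ and $E^*_i$ (both lie in $T$ because $A$ generates $M$ and $A^*$ generates $M^*$), hence preserves endpoint, dual endpoint, and the cardinality in \eqref{eq:diam}; (b) by semisimplicity (Lemma~\ref{lem:Vdecomp}) the nucleus of Definition~\ref{def:nuc} coincides with the sum of the isotypic components of displacement-$0$ type, so any irreducible submodule of it, being isomorphic to its image under some nonzero coordinate projection, has displacement $0$; (c) the equality case of Lemma~\ref{lem:disp0} then gives $t=r$, $d=D-2r$, and thinness, with $0\leq r\leq D/2$ following from $r\geq 0$ and $d\geq 0$. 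The one step worth making explicit is the standard fact that every irreducible submodule of an isotypic component is isomorphic to its type; with that granted, your bookkeeping in the third paragraph is sound and there is no gap.
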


\begin{defi}  {\rm (See \cite[Definition 7.2]{T:nuc}.) }
\label{def:Ni}   \samepage
\ifDRAFT {\rm def:Ni}. \fi
For $0 \leq i \leq d$, define
\[
{\mathcal N}_i = 
(E^*_0 V + E^*_1 V + \cdots + E^*_i V) \cap
(E_0 V + E_1 V + \cdots E_{D-i} V).
\]
\end{defi}

\begin{note}
We have
${\mathcal N}_0 = E^*_0 V$ and
${\mathcal N}_D = E_0 V$.
\end{note}

\begin{lemma}  {\rm (See \cite[Lemma 7.1]{T:nuc}.) }
\label{lem:Ndirect}   \samepage
\ifDRAFT {\rm lem:Ndirect}. \fi
The following sum is direct:
\[
 {\mathcal N}_0 + {\mathcal N}_1 + \cdots + {\mathcal N}_D.
\]
\end{lemma}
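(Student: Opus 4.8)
The plan is to reduce the directness of $\N_0+\N_1+\cdots+\N_D$ to a single ``general position'' statement about the two increasing flags
\[
 E^*_0V \subseteq E^*_0V+E^*_1V \subseteq \cdots \subseteq V,
 \qquad\qquad
 E_0V \subseteq E_0V+E_1V \subseteq \cdots \subseteq V,
\]
and then to establish that statement by decomposing $V$ into irreducible $T$-modules. Throughout abbreviate $V^*_i = E^*_0V+\cdots+E^*_iV$ and $U_j = E_0V+\cdots+E_jV$, so that $\N_i = V^*_i \cap U_{D-i}$ and both families are increasing. The key claim I would isolate is
\begin{equation}
 V^*_i \cap U_{D-i-1} = 0 \qquad\qquad (0 \leq i \leq D-1).
 \tag{$\star$}
\end{equation}

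First I would show that $(\star)$ forces directness, by induction on the top index. Suppose $u_0+u_1+\cdots+u_D=0$ with $u_i \in \N_i$, and let $M$ be the largest index with $u_M \neq 0$ (if there is none, we are done). For $i<M$ we have $u_i \in V^*_i \subseteq V^*_{M-1}$, so $E^*_M u_i = 0$ because $E^*_M E^*_j = 0$ for $j \neq M$; and $u_i = 0$ for $i>M$. Applying $E^*_M$ to the relation therefore gives $E^*_M u_M = 0$. Since $u_M \in V^*_M = \sum_{j \leq M} E^*_j V$ and this sum is direct, vanishing of the top component $E^*_M u_M$ places $u_M \in V^*_{M-1}$. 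As also $u_M \in U_{D-M} = U_{D-(M-1)-1}$, we get $u_M \in V^*_{M-1}\cap U_{D-(M-1)-1}$, which is $0$ by $(\star)$. This contradicts the choice of $M$, so all $u_i=0$ and the sum is direct.

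It remains to prove $(\star)$, and here I would pass to irreducible $T$-modules. Write $V=\bigoplus_h W_h$ as in Lemma \ref{lem:Vdecomp}. Because $E^*_jV=\bigoplus_h E^*_jW_h$ and $E_lV=\bigoplus_h E_lW_h$, both flags are compatible with this decomposition, and hence so is their intersection; thus it suffices to prove, for each irreducible $T$-module $W$ with endpoint $r$, dual endpoint $t$, and diameter $d$,
\begin{equation}
 \Big(\textstyle\sum_{j\leq i}E^*_jW\Big)\cap\Big(\textstyle\sum_{l\leq D-i-1}E_lW\Big)=0
 \qquad\qquad (0\leq i\leq D-1).
 \tag{$\star\star$}
\end{equation}
A nonzero vector in this intersection would have all its $E^*$-support in degrees $\leq i$ and all its $E$-support in degrees $\leq D-i-1$; since $E^*_jW\neq0$ only for $r\leq j$ and $E_lW\neq0$ only for $t\leq l$, this forces $r\leq i$ and $t\leq D-i-1$, whence $r+t\leq D-1$. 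The displacement inequality $r+t-D+d\geq0$ of Lemma \ref{lem:disp0} then only yields $d\geq1$, so it does not by itself kill the intersection. What is really needed is that inside $W$ the two flags $\{\sum_{j\leq a}E^*_jW\}$ and $\{\sum_{l\leq b}E_lW\}$ are in general position relative to the shape $\{\rho_s\}$ of Lemma \ref{lem:shape}, i.e.\ their intersections have the smallest dimension allowed by inclusion--exclusion; I expect this general-position property to be the main obstacle. For a thin module (in particular every module of displacement $0$, by Lemma \ref{lem:disp0a}) it is the standard fact that the two eigenflags of the underlying Leonard pair are in general position, after which a short count with the shape gives $(\star\star)$. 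The general case is exactly the content of the displacement and split decompositions of \cite{T:disp}; indeed the directness asserted here is \cite[Corollary 5.8]{T:disp}.
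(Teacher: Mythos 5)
The paper does not actually prove this lemma: it is quoted from \cite[Lemma 7.1]{T:nuc}, which in turn rests on \cite[Corollary 5.8]{T:disp}, so there is no internal argument to compare yours against. Judged on its own terms, your reduction is correct as far as it goes. The claim $(\star)$ (in your notation $V^*_i=E^*_0V+\cdots+E^*_iV$, $U_j=E_0V+\cdots+E_jV$, so $(\star)$ reads $V^*_i\cap U_{D-i-1}=0$) does imply directness by your $E^*_M$-projection argument, and is in fact equivalent to it, since $V^*_i\cap U_{D-i-1}\subseteq \N_i\cap\N_{i+1}$. The passage to irreducible summands is also legitimate, because $E^*_jV=\bigoplus_h E^*_jW_h$ and $E_lV=\bigoplus_h E_lW_h$ make both flags, and hence their intersection, compatible with $V=\bigoplus_h W_h$.

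The genuine gap is that $(\star\star)$ is never proved, and that is where all the content lies. For a thin module you appeal to ``the standard fact that the two eigenflags of the underlying Leonard pair are in general position''; the precise fact you need is that the two \emph{increasing} flags $\bigl\{\sum_{j\leq a}E^*_jW\bigr\}$ and $\bigl\{\sum_{l\leq b}E_lW\bigr\}$ meet trivially when $a+b<d$ in local indices --- this is true (it follows from the split decomposition of a Leonard pair taken with respect to the reversed eigenvalue ordering), but it is a theorem about Leonard pairs that needs to be stated and cited, not a triviality. For a non-thin module you identify the general-position property as ``the main obstacle'' and then dispose of it by citing \cite[Corollary 5.8]{T:disp} --- but that corollary \emph{is} the statement being proved, so the argument is circular at exactly the decisive step. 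Note also that the proof in \cite{T:disp} does not proceed module by module: it is a global argument on $V$ that uses the tridiagonal relations $AE^*_iV\subseteq E^*_{i-1}V+E^*_iV+E^*_{i+1}V$ and $A^*E_jV\subseteq E_{j-1}V+E_jV+E_{j+1}V$ coming from the $P$- and $Q$-polynomial structure to construct the split decomposition of $V$. To make your proposal self-contained you would have to either prove the general-position statement for arbitrary irreducible $T$-modules (with the shape $\{\rho_s\}$ entering as you indicate) or reproduce the flag/tridiagonal argument of \cite{T:disp}; as written, the proposal is a correct reduction followed by a citation of the result itself.
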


\begin{defi}  {\rm (See \cite[Definition 7.4]{T:nuc}.) }
\label{def:N}   \samepage
\ifDRAFT {\rm def:N}. \fi
Define a subspace $\mathcal N$ by
 \[
{\mathcal N} =
 {\mathcal N}_0 + {\mathcal N}_1 + \cdots + {\mathcal N}_D.
\]
\end{defi}

\begin{lemma} {\rm (See \cite[Theorem 7.5]{T:nuc}.) }
\label{lem:nuc}   \samepage
\ifDRAFT {\rm lem:nuc}. \fi
The following are the same:
\begin{itemize}
\item[\rm (i)]
the subspace $\N=\N(x)$ from Definition \ref{def:N};
\item[\rm (ii)]
The nucleus of $\Gamma$ with respect to $x$.
\end{itemize}
\end{lemma}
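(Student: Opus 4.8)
The plan is to reduce the statement to a computation on a single irreducible $T$-module and then to invoke the split decomposition of such a module. First I would fix a decomposition $V = \sum_{h=1}^n W_h$ into irreducible $T$-modules (Lemma \ref{lem:Vdecomp}). Since each $W_h$ is $T$-invariant, it is invariant under every $E_j$ and $E^*_j$, so for $0 \leq i \leq D$ we have $E^*_0 V + \cdots + E^*_i V = \bigoplus_{h} (E^*_0 W_h + \cdots + E^*_i W_h)$ and $E_0 V + \cdots + E_{D-i} V = \bigoplus_{h} (E_0 W_h + \cdots + E_{D-i} W_h)$ relative to $V = \bigoplus_h W_h$. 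Because intersection distributes over a common direct sum decomposition (if $v = \sum_h v_h$ lies in both sums then each $v_h$ lies in both summands), this yields $\N_i = \bigoplus_{h} \N_i(W_h)$, where for an irreducible $T$-module $W$ I abbreviate
\[
\N_i(W) = (E^*_0 W + \cdots + E^*_i W) \cap (E_0 W + \cdots + E_{D-i} W).
\]
Hence $\N = \bigoplus_h \sum_{i=0}^D \N_i(W_h)$, and by Lemma \ref{lem:Ndirect} each inner sum is direct, so it suffices to compute $\N(W) := \sum_{i=0}^D \N_i(W)$ for a single irreducible $T$-module $W$.

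Second, I would analyze $\N_i(W)$ for $W$ with endpoint $r$, dual endpoint $t$, diameter $d$, and displacement $\delta = r + t - D + d$. By \cite[Lemma 5.4]{T:nuc}, the space $E^*_0 W + \cdots + E^*_i W$ equals $E^*_r W + \cdots + E^*_i W$ (nonzero precisely when $i \geq r$), while $E_0 W + \cdots + E_{D-i} W$ equals $E_t W + \cdots + E_{D-i} W$ (nonzero precisely when $i \leq D-t$). The key tool is the split decomposition $W = \bigoplus_{l=0}^d U_l$ of the tridiagonal pair $(A,A^*)$ acting on $W$ (see \cite{T:disp}), where $U_l = (E^*_r W + \cdots + E^*_{r+l} W) \cap (E_t W + \cdots + E_{t+d-l} W)$ and $\dim U_l = \rho_l$ for the shape $\{\rho_l\}_{l=0}^d$ of $W$ (Lemma \ref{lem:shape}). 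From $U_l \subseteq E^*_r W + \cdots + E^*_{r+l} W$ together with $\dim U_l = \rho_l$, a dimension count gives $E^*_0 W + \cdots + E^*_i W = U_0 + \cdots + U_{i-r}$. Similarly, from $U_l \subseteq E_t W + \cdots + E_{t+d-l} W$ and the symmetry $\rho_l = \rho_{d-l}$ (Definition \ref{def:shape}), one obtains $E_0 W + \cdots + E_{D-i} W = U_c + \cdots + U_d$, where $c = (i-r) + \delta$ (one checks $c = d - (D-i-t)$).

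Third, since the $U_l$ are independent,
\[
\N_i(W) = (U_0 + \cdots + U_{i-r}) \cap (U_c + \cdots + U_d),
\]
which equals $U_{i-r}$ when $c \leq i-r$ and equals $0$ otherwise. As $c - (i-r) = \delta \geq 0$ by Lemma \ref{lem:disp0}, the intersection vanishes whenever $\delta > 0$, giving $\N(W) = 0$; and when $\delta = 0$ we get $\N_i(W) = U_{i-r}$ for $r \leq i \leq D-r$ and $0$ otherwise, whence $\N(W) = \bigoplus_{l=0}^d U_l = W$. Summing over the decomposition yields $\N = \bigoplus_{h : \delta_h = 0} W_h$. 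To finish, I would identify this with Definition \ref{def:nuc}: each $W_h$ with $\delta_h = 0$ is a displacement-$0$ irreducible submodule, so $\N$ lies in the nucleus; conversely, for any displacement-$0$ irreducible submodule $W'$ the same module computation gives $\sum_i \N_i(W') = W'$, and since $\N_i(W') \subseteq \N_i$ we obtain $W' \subseteq \N$, so the nucleus lies in $\N$.

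The main obstacle is the vanishing $\N_i(W) = 0$ for $\delta > 0$. A bare dimension count does not settle it: using $\rho_l = \rho_{d-l}$ one finds $\dim(E^*_0 W + \cdots + E^*_i W) + \dim(E_0 W + \cdots + E_{D-i} W) \leq \dim W$, which is consistent with a nonzero intersection. What forces the intersection to be zero is that the split decomposition expresses both flags in the common coordinate system $\{U_l\}_{l=0}^d$, turning the intersection into an overlap of index ranges governed precisely by the sign of $\delta$. Everything else is bookkeeping with the endpoints, the diameter, and the shape.
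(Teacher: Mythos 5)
The paper does not actually prove this lemma: it is imported verbatim as \cite[Theorem~7.5]{T:nuc}, so there is no internal argument to compare against. Your proposal supplies a genuine proof, and I believe it is correct; it is essentially the split-decomposition argument that underlies the cited theorem and \cite{T:disp}. The reduction $\N_i=\bigoplus_h \N_i(W_h)$ is sound (each $E_j$, $E^*_j$ lies in $T$, so the two flags refine along $V=\bigoplus_h W_h$, and uniqueness of components gives distributivity of the intersection), and the index-range computation $c-(i-r)=\delta$ correctly isolates the sign of the displacement as the deciding quantity; the final two-sided comparison with Definition~\ref{def:nuc}, covering displacement-$0$ irreducible submodules not belonging to the chosen decomposition, is the step most often omitted and you handle it properly. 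The one load-bearing input you do not prove is the split decomposition of a single irreducible $T$-module $W$: the existence of $U_l=(E^*_rW+\cdots+E^*_{r+l}W)\cap(E_tW+\cdots+E_{t+d-l}W)$ with $W=\bigoplus_l U_l$, $\dim U_l=\rho_l$, and the two flag identities. This is not established anywhere in the present paper (Lemma~\ref{lem:shape} gives only the dimensions of the individual $E_jW$, $E^*_jW$), and it requires knowing that $(A,A^*)$ acts on $W$ as a tridiagonal pair with the restricted eigenvalue orderings standard; that is exactly the content of \cite{ITT} and is used in \cite{T:disp}, so citing it is legitimate, but it should be flagged explicitly as the external ingredient rather than folded into ``a dimension count gives.'' Your closing remark is also well taken: a pure dimension count cannot force $\N_i(W)=0$ when $\delta>0$, and the common coordinate system $\{U_l\}$ is what makes the argument work.
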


\section{The Johnson graph $J(N, D)$}
\label{sec:Johnson}
\ifDRAFT {\rm sec:Johnson}. \fi

In this section we recall the Johnson graph.
Let $N$, $D$ denote integers such that $0 \leq D \leq N$.
Let $\Omega$ denote a set of size $N$.
The graph $J(N,D)$ has vertex set $\binom{\Omega}{D}$, and 
two vertices  $x,y$  are adjacent whenever
$|x \cap y| = D-1$.
This graph $J(N, D)$ is  called a {\em Johnson graph}.
The graph $J(N, D)$ is isomorphic to $J(N, N-D)$.
For this reason and to avoid degenerate situations, we assume that
\[
  N > 2D.
\]
By \cite[Section 2.10.3]{BBIT}, the graph $J(N, D)$ is
distance-regular with diameter $D$.
For vertices $x,y$ of $J(N,D)$ we have
\[
 \partial (x,y) = D- |x \cap y|.
\]
By \cite[Theorem 9.1.2]{BCN}, the intersection numbers of $J(N,D)$ satisfy
\begin{align*}
c_i &= i^2  \qquad\qquad\qquad\qquad\qquad (1 \leq i \leq D),
\\
b_i &= (D-i)(N-D-i)  \qquad\;  (0 \leq i \leq D-1).
\end{align*}
By \cite[Proposition 2.87]{BBIT}, 
\begin{equation}
k_i = \begin{pmatrix} D  \\ i \end{pmatrix}
       \begin{pmatrix} N-D \\ i \end{pmatrix}
 \qquad\qquad (0 \leq i \leq D).         \label{eq:ki}
\end{equation}
Consider the adjacency matrix  $A$ of $J(N,D)$.
By \cite[Theorem 2.95]{BBIT},
the eigenvalues of  $A$ are
\begin{equation}
\th_i = (D-i)(N-D-i) - i \qquad\qquad (0 \leq i \leq D).   \label{eq:thi}
\end{equation}
For $0 \leq i \leq D$,
let $E_i$ denote the primitive idempotent of $A$ associated with $\th_i$.
By \cite[Corollary 8.4.2]{BCN}, $J(N,D)$ is $Q$-polynomial with respect to
the ordering $\{E_i\}_{i=0}^D$.
Fix a vertex $x$ of $J(N,D)$, and
abbreviate $T=T(x)$.
By \cite[Theorem 2.95]{BBIT},
the eigenvalues of $A^* = A^*(x)$ are as follows.
If $D=0$ then $\th^*_0 = 0$.
If $D \geq 1$ then
\begin{equation}
\th^*_i = N - 1 -
      \frac{ i N (N-1)} { D(N-D) }      
 \qquad\qquad (0 \leq i \leq D).   \label{eq:thsi}
\end{equation}

\section{The subgraph $\Gamma_i(x)$}
\label{sec:Gammai}
\ifDRAFT {\rm sec:Gamma}. \fi

We continue to discuss the Johnson graph $ J(N,D)$ with the fixed vertex $x$.
In this section we investigate the structure of the subgraph $\Gamma_i(x)$
for $0 \leq i \leq D$.
We construct a graph isomorphism 
\begin{equation}
\Gamma_i(x)  \to  J(D, i) \times J(N-D,i).      \label{eq:iso0}
\end{equation}
This isomorphism shows that the subgraph $\Gamma_i(x)$ has two types of edges,
some come from $J(D,i)$ and some come from $J(N-D,i)$.
We consider a new graph  obtained from
$\Gamma_i(x)$ by removing the edges that come from $J(D,i)$.
We describe the connected components of the new graph.

The fixed vertex $x$ is a subset of $\Omega$ with $|x|=D$.
For the rest of this section, fix an integer $i$  with $0 \leq i \leq  D$.
Consider the Johnson graph $J(D,i)$ with vertex set $\binom{x}{i}$,
and the Johnson graph $J(N-D,i)$ with vertex set
$\binom{\Omega \setminus x}{i}$.
The Cartesian product $J(D,i) \times J(N-D,i)$ has vertex set
\[
  \binom{x}{i} \times \binom{\Omega \setminus x}{i}.
\]
For $y \in \Gamma_i(x)$ we have
\[
  x \setminus y  \in \binom{x}{i},
\qquad\qquad
  y \setminus x  \in \binom{\Omega \setminus x}{i}.
\]
We consider a  map
\begin{equation}
\Gamma_i(x) \to J(D,i) \times J(N-D, i)    \label{eq:natural}
\end{equation}
that sends
\[
y \mapsto (x \setminus y , \, y \setminus x),
\qquad\qquad 
 y \in \Gamma_i(x).
\]
Our next goal is to show that the map \eqref{eq:natural} is a graph isomorphism.

\begin{lemma}   \label{lem:Gaiadjacent}   \samepage
\ifDRAFT {\rm lem:Gaiadjacent}. \fi
Let $y,z$ denote vertices in $\Gamma_i(x)$.
Then $y,z$ are adjacent in the subgraph $\Gamma_i(x)$ if and only if one of the following holds:
\begin{itemize}
\item[\rm (i)]
$x \setminus y =  x \setminus z$, and $\;\;y \setminus  x$, $z\setminus x$ are adjacent
in $J(N-D,i)$;
\item[\rm (ii)]
$y \setminus x = z \setminus x$,  and $x \setminus y$, $x \setminus z$ are adjacent
 in $J(D,i)$.
\end{itemize}
\end{lemma}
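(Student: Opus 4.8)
The plan is to translate the condition that $y,z$ be adjacent in $\Gamma_i(x)$ --- equivalently, that $y,z$ be adjacent in $J(N,D)$, i.e.\ $|y \cap z| = D-1$ --- into a statement about the two coordinates $x \setminus y$, $y \setminus x$ (and likewise for $z$) of the map \eqref{eq:natural}, and then to read off that this statement is precisely (i) or (ii). The whole argument rests on one decomposition: since each element of $\Omega$ lies in $x$ or in $\Omega \setminus x$, the set $y \cap z$ breaks up disjointly as
\[
 y \cap z = \big((x \cap y) \cap (x \cap z)\big) \cup \big((y \setminus x) \cap (z \setminus x)\big),
\]
the two pieces being disjoint because the first is contained in $x$ and the second in $\Omega \setminus x$. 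Taking cardinalities gives $|y \cap z| = |(x \cap y) \cap (x \cap z)| + |(y \setminus x) \cap (z \setminus x)|$.

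Next I would bring in the sizes coming from $y,z \in \Gamma_i(x)$: we have $|x \cap y| = |x \cap z| = D-i$ and $|y \setminus x| = |z \setminus x| = i$. Hence the first summand is at most $D-i$, with equality if and only if $x \cap y = x \cap z$ (equivalently $x \setminus y = x \setminus z$), and the second summand is at most $i$, with equality if and only if $y \setminus x = z \setminus x$. In particular $|y \cap z| \le D$. The crucial observation is that if neither of these two equalities holds, then both summands drop by at least one, so $|y \cap z| \le (D-i-1)+(i-1) = D-2$, which is too small for adjacency. Therefore adjacency forces exactly one of the equalities $x \setminus y = x \setminus z$ or $y \setminus x = z \setminus x$ (not both, since both together would give $y=z$).

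Finally I would handle the two surviving cases. If $x \setminus y = x \setminus z$, the first summand equals $D-i$, so $|y \cap z| = D-1$ is equivalent to $|(y \setminus x) \cap (z \setminus x)| = i-1$, which is adjacency in $J(N-D,i)$; this is case (i). If instead $y \setminus x = z \setminus x$, the second summand equals $i$, so $|y \cap z| = D-1$ is equivalent to $|(x \cap y) \cap (x \cap z)| = D-i-1$. To put this in the form (ii) I would pass to complements inside $x$ via inclusion--exclusion:
\[
 |(x \setminus y) \cap (x \setminus z)| = D - \big|(x \cap y) \cup (x \cap z)\big| = D - \big(2(D-i) - |(x \cap y) \cap (x \cap z)|\big),
\]
so $|(x \cap y) \cap (x \cap z)| = D-i-1$ is the same as $|(x \setminus y) \cap (x \setminus z)| = i-1$, i.e.\ adjacency of $x \setminus y, x \setminus z$ in $J(D,i)$; this is case (ii). The converse directions are immediate by substituting the cardinalities back into the displayed count. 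I expect the only point requiring care to be this last translation between the intersection sets $x \cap y$ and the complement sets $x \setminus y$; everything else is routine bookkeeping with the disjoint decomposition of $y \cap z$.
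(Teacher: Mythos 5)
Your proposal is correct and follows essentially the same route as the paper: both arguments reduce adjacency to the count $|y\cap z|=D-1$ and split $y\cap z$ into its part inside $x$ and its part inside $\Omega\setminus x$ (the paper does this via a seven-region Venn diagram and calls the final case analysis routine). Your explicit observation that if neither coordinate agrees then both summands drop by at least one, forcing $|y\cap z|\le D-2$, is a clean way of supplying the detail the paper leaves to the reader.
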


\begin{proof}
Consider the Venn diagram for $x$, $y$, $z$:
\newpage
\begin{center}
\begin{picture}(200,100)
\put(0,0){\circle{100}}
\put(50,0){\circle{100}}
\put(25,50){\circle{100}}
\put(25,105){$x$}
\put(-60,0){$y$}
\put (105,0){$z$}
\put(25, 60){$a$}
\put(-20,-15){$b$}
\put(70,-15){$c$}
\put(25,-20){$d$}
\put(-10,30){$e$}
\put(55,30){$f$}
\put(25,20){$g$}
\end{picture}
\end{center}
\vspace{2cm}
Since $|x|=|y|=|z| = D$,
\begin{align}
& |a|+|e|+|f|+|g|=D    \label{eq:aux1},
\\
& |b|+|d|+|e|+|g|=D,    \label{eq:aux2}
\\
&|c|+|d|+|f|+|g|=D.      \label{eq:aux3}
\end{align}
Since $y, z \in \Gamma_i(x)$, we have
$|x \cap y| = D-i$ and $|x \cap z| = D-i$. So
\begin{align}
& |e|+|g| = D-i, 
\qquad\qquad 
|f|+|g| = D-i.    \label{eq:aux4}
\end{align}
The vertices $y,z$ are adjacent if and only if
\begin{equation}
  |d| + |g| = D-1.              \label{eq:cond1}
\end{equation}
Condition (i) holds if and only if
\begin{equation}
 |e| = 0, \qquad
 |f| = 0 , \qquad
 |d| = i-1.                \label{eq:cond2}
\end{equation}
Condition (ii) holds if and only if
\begin{equation}
 |b| = 0, \qquad
 |c| = 0 , \qquad
 |a| = i-1.                \label{eq:cond3}
\end{equation}
Using \eqref{eq:aux1}--\eqref{eq:aux4} we  routinely
find that \eqref{eq:cond1} holds if and only if
one of \eqref{eq:cond2}, \eqref{eq:cond3} holds.
\end{proof}

\begin{lemma}  \label{lem:natural}   \samepage
\ifDRAFT {\rm lem:natural}. \fi
The map \eqref{eq:natural}
is a graph isomorphism.
\end{lemma}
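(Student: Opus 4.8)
The plan is to show that the map in \eqref{eq:natural} is a well-defined bijection that preserves adjacency in both directions. First I would verify that the map is well-defined by checking that for $y \in \Gamma_i(x)$ we indeed have $x \setminus y \in \binom{x}{i}$ and $y \setminus x \in \binom{\Omega \setminus x}{i}$; this is immediate since $\partial(x,y)=i$ forces $|x \cap y| = D-i$, so $|x \setminus y| = D-(D-i) = i$ and, using $|y|=D$, also $|y \setminus x| = i$. Thus the image genuinely lands in the vertex set $\binom{x}{i} \times \binom{\Omega \setminus x}{i}$ of $J(D,i) \times J(N-D,i)$.

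Next I would establish that the map is a bijection. The cleanest route is to exhibit an inverse: given a pair $(\beta, \gamma)$ with $\beta \in \binom{x}{i}$ and $\gamma \in \binom{\Omega \setminus x}{i}$, reconstruct the vertex $y = (x \setminus \beta) \cup \gamma$. I would check that this $y$ has size $D$ and lies in $\Gamma_i(x)$, and that the two assignments are mutually inverse. Since $\beta \subseteq x$ and $\gamma \subseteq \Omega \setminus x$ are disjoint, $|y| = (D-i)+i = D$, and $x \cap y = x \setminus \beta$ has size $D-i$, so $\partial(x,y)=i$ as required. This gives the bijection directly, which is cleaner than a counting argument (though one could alternatively compare the cardinality $k_i = \binom{D}{i}\binom{N-D}{i}$ from \eqref{eq:ki} with $|\binom{x}{i}| \cdot |\binom{\Omega \setminus x}{i}|$).

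Finally, the adjacency-preservation is exactly the content of Lemma \ref{lem:Gaiadjacent}: vertices $y,z \in \Gamma_i(x)$ are adjacent in the subgraph if and only if condition (i) or (ii) of that lemma holds. Condition (i) says the first coordinates agree and the second coordinates are adjacent in $J(N-D,i)$, while condition (ii) says the second coordinates agree and the first are adjacent in $J(D,i)$; this is precisely the definition of adjacency in the Cartesian product $J(D,i) \times J(N-D,i)$. So the map and its inverse both carry edges to edges.

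I do not expect any serious obstacle here: the main step, the combinatorial equivalence \eqref{eq:cond1} $\Leftrightarrow$ \eqref{eq:cond2} or \eqref{eq:cond3}, has already been dispatched in Lemma \ref{lem:Gaiadjacent}, so the present lemma is essentially a matter of assembling well-definedness, bijectivity, and that lemma into the statement that \eqref{eq:natural} is a graph isomorphism. The only care needed is to confirm the inverse map is well-defined and truly inverse, which amounts to the disjointness bookkeeping described above.
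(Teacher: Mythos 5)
Your proof is correct and follows essentially the same route as the paper: adjacency preservation is delegated to Lemma \ref{lem:Gaiadjacent}, and the only difference is that you establish bijectivity by exhibiting the explicit inverse $(\beta,\gamma)\mapsto(x\setminus\beta)\cup\gamma$, whereas the paper notes injectivity and compares cardinalities via \eqref{eq:ki}. Both are fine; your version is marginally more self-contained.
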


\begin{proof}
We first show that  the map \eqref{eq:natural} is a bijection. 
Clearly, the map \eqref{eq:natural} is injective.
By \eqref{eq:ki} the graphs $\Gamma_i(x)$ and 
$J(D,i) \times J(N-D,i)$ have the same number of vertices.
By these comments, the map \eqref{eq:natural} is a bijection.
The map \eqref{eq:natural} respects adjacency by Lemma \ref{lem:Gaiadjacent}.
The result follows.
\end{proof}

Referring to the isomorphism \eqref{eq:natural},
consider a new graph that is obtained from the subgraph $\Gamma_i (x)$
by removing the edges that come from $J(D,i)$.
Our next goal is to describe the connected components of the new graph.
By Lemma \ref{lem:cartesian}, these connected components are
the preimages under \eqref{eq:natural} of the sets
\[
    \{\alpha\} \times \binom{\Omega \setminus x}{i},
  \qquad \qquad  \alpha \in \binom{x}{i}.
\]

\begin{lemma}   \label{lem:Calpha0}   \samepage
\ifDRAFT {\rm lem:Calpha0}. \fi
Referring to the isomorphism \eqref{eq:natural},
for $\alpha  \in  \binom{x}{i}$,
the preimage of  $ \{\alpha\} \times \binom{\Omega \setminus x}{i}$
is the set
\begin{align}
& \{y \in X \;|\; x \setminus y =  \alpha\}.     \label{eq:C+0} 
\end{align}
\end{lemma}

\begin{proof}
Clear by construction.
\end{proof}

\begin{lemma}   \label{lem:Calpha2}   \samepage
\ifDRAFT {\rm lem:Calpha2}. \fi
Consider a new graph obtained from the subgraph  $\Gamma_i(x)$
by removing the edges that come from $J(D,i)$.
For the new graph 
the connected components are
\begin{align}
& \{ y \in X \;|\; y \cap x = \alpha\},
\qquad\qquad
 \alpha \subseteq x, \;\; |\alpha| =D- i.        \label{eq:C+}
\end{align}
\end{lemma}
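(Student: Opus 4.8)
The plan is to combine the connected-component description already obtained for the Cartesian product with the complementation bijection on subsets of $x$. First I would invoke Lemma \ref{lem:natural}, which identifies $\Gamma_i(x)$ with $J(D,i) \times J(N-D,i)$ via the map \eqref{eq:natural}, so that removing the edges coming from $J(D,i)$ corresponds exactly to the operation in Lemma \ref{lem:cartesian} applied with the first factor equal to $J(D,i)$ and the second factor equal to $J(N-D,i)$. Applying Lemma \ref{lem:cartesian}, the connected components of the new graph are the preimages under \eqref{eq:natural} of the sets $\{\beta\} \times \binom{\Omega \setminus x}{i}$ as $\beta$ ranges over $\binom{x}{i}$.

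Next I would use Lemma \ref{lem:Calpha0} to identify each such preimage explicitly: for $\beta \in \binom{x}{i}$, the preimage of $\{\beta\} \times \binom{\Omega \setminus x}{i}$ is the set $\{y \in X \;|\; x \setminus y = \beta\}$. It remains to rewrite this family in the form \eqref{eq:C+}. The key observation is that for $y \in \Gamma_i(x)$ the set $x$ is the disjoint union of $x \cap y$ and $x \setminus y$, so the condition $x \setminus y = \beta$ is equivalent to $y \cap x = x \setminus \beta$. Setting $\alpha = x \setminus \beta$, the map $\beta \mapsto \alpha$ is a bijection from $\binom{x}{i}$ onto the collection of subsets $\alpha \subseteq x$ with $|\alpha| = D-i$, and under this reindexing each component becomes $\{y \in X \;|\; y \cap x = \alpha\}$, which is precisely \eqref{eq:C+}.

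Finally I would record the consistency check that these sets genuinely lie in the vertex set of $\Gamma_i(x)$: if $y \cap x = \alpha$ with $|\alpha| = D-i$, then $|x \cap y| = D-i$, whence $\partial(x,y) = D - |x \cap y| = i$ and $y \in \Gamma_i(x)$. This confirms that the listed sets partition the vertex set of $\Gamma_i(x)$ as required of the connected components. I do not expect any real obstacle here, since the substantive content is carried by the earlier lemmas; the only point requiring care is the bookkeeping of the complementation $\beta \mapsto x \setminus \beta$, which reindexes the components from the size-$i$ subsets natural to the $x \setminus y$ coordinate to the size-$(D-i)$ subsets appearing in the statement.
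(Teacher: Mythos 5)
Your proposal is correct and follows essentially the same route as the paper: it combines Lemma \ref{lem:cartesian}, Lemma \ref{lem:natural}, and Lemma \ref{lem:Calpha0}, and then performs the complementation $\beta \mapsto x \setminus \beta$ to pass from $\{y \;|\; x\setminus y = \beta\}$ to $\{y \;|\; y\cap x = \alpha\}$, which is exactly the (unstated) bookkeeping behind the paper's one-line proof.
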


\begin{proof}
By the claim above Lemma \ref{lem:Calpha0} and Lemmas  \ref{lem:natural} and \ref{lem:Calpha0}.
\end{proof}

\begin{corollary}    \label{cor:newgraph}   \samepage
\ifDRAFT {\rm cor:newgraph}. \fi
Referring to Lemma \ref{lem:Calpha2},
each connected component of the new graph is isomorphic to $J(N-D, i)$.
For the new graph the number of the  connected components  is 
{\scriptsize $\begin{pmatrix} D  \\  i  \end{pmatrix}$}.
\end{corollary}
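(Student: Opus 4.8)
The plan is to read off both assertions directly from the machinery already assembled, invoking Lemma \ref{lem:natural}, Lemma \ref{lem:Calpha2}, and the general Cartesian-product fact Lemma \ref{lem:cartesian}. The key point is that Lemma \ref{lem:natural} identifies $\Gamma_i(x)$ with the Cartesian product $J(D,i) \times J(N-D,i)$, and the ``new graph'' of Lemma \ref{lem:Calpha2} is precisely the graph obtained from this product by deleting the edges that come from the first factor $J(D,i)$. Thus I would apply Lemma \ref{lem:cartesian} with $\Gamma = J(D,i)$ (vertex set $\binom{x}{i}$) and $\Gamma' = J(N-D,i)$ (vertex set $\binom{\Omega \setminus x}{i}$). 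That lemma requires both factors to be connected, which holds because Johnson graphs are distance-regular, hence connected, under the standing size hypotheses $i \le D$ and $i \le N-D$ (the latter from $N > 2D$).

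First I would note that Lemma \ref{lem:cartesian} tells us each connected component of the new graph, being of the form $\{\alpha\} \times \binom{\Omega\setminus x}{i}$, is isomorphic to the second factor $\Gamma' = J(N-D,i)$. Transporting this isomorphism back through the map \eqref{eq:natural} (a graph isomorphism by Lemma \ref{lem:natural}), each connected component of the new graph on $\Gamma_i(x)$ is isomorphic to $J(N-D,i)$, giving the first assertion. This requires only that the edge-deletion operation commutes with the isomorphism \eqref{eq:natural}, which it does by the definition of ``comes from $J(D,i)$.''

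For the count, Lemma \ref{lem:cartesian} states that the number of components equals the number of vertices of the first factor, namely $|\binom{x}{i}| = \binom{D}{i}$ since $|x| = D$. Alternatively, this matches the indexing in Lemma \ref{lem:Calpha2}: the components are indexed by subsets $\alpha \subseteq x$ with $|\alpha| = D-i$, and there are $\binom{D}{D-i} = \binom{D}{i}$ such subsets. Either route yields the stated count $\binom{D}{i}$.

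I do not anticipate a genuine obstacle here; the corollary is a bookkeeping consequence of results already in place. The only point demanding a moment's care is confirming that both Johnson-graph factors are connected so that Lemma \ref{lem:cartesian} applies, and keeping the two equivalent descriptions of a component consistent: Lemma \ref{lem:Calpha0} indexes a component by $\alpha = x \setminus y \in \binom{x}{i}$, whereas Lemma \ref{lem:Calpha2} indexes the same component by $y \cap x = x \setminus \alpha$, a set of size $D-i$. Tracking this complementation correctly is what makes the two count expressions $\binom{D}{i}$ and $\binom{D}{D-i}$ agree.
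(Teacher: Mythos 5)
Your proposal is correct and follows essentially the same route as the paper: the paper's proof simply cites Lemmas \ref{lem:Calpha0} and \ref{lem:Calpha2}, which in turn rest on the isomorphism of Lemma \ref{lem:natural} and the Cartesian-product fact Lemma \ref{lem:cartesian}, exactly the chain you spell out. Your extra care about connectedness of the factors and about the complementation between the index $\alpha = x\setminus y$ of size $i$ and the index $y\cap x$ of size $D-i$ is sound bookkeeping that the paper leaves implicit.
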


\begin{proof}
By Lemmas \ref{lem:Calpha0} and \ref{lem:Calpha2}.
\end{proof}

\section{The vectors $\alpha^\vee$ and $\alpha^\N$}
\label{sec:alphaN}
\ifDRAFT {\rm sec:alphaN}. \fi

We continue to discuss the Johnson graph $ J(N,D)$ with the fixed vertex $x$.
Recall the standard module $V = \C^X$.
We introduce some vectors in $V$ that will form a basis
of the nucleus with respect to $x$.

\begin{defi}  \label{def:alphaN}   \samepage
\ifDRAFT {\rm def:alphaN}. \fi
For a subset $\alpha \subseteq x$, define vectors in $V$:
\[
\alpha^\vee =
\sum_{\text{\scriptsize $\begin{matrix} y \in X \\ \alpha \subseteq y \end{matrix}$}} \widehat{y},
\qquad\qquad\qquad
\alpha^\N =
\sum_{\text{\scriptsize $\begin{matrix} y \in X \\ y \cap x = \alpha \end{matrix}$}} \widehat{y}.
\]
The vector $\alpha^\vee$ is the characteristic vector
of the set of vertices in $X$ that contain $\alpha$,
and  $\alpha^\N$ is  the characteristic vector
of the set of vertices in $X$ whose intersection with $x$ is equal to $\alpha$.
\end{defi}

The vectors $\alpha^\vee$ and $\alpha^\N$ are related as follows.

\begin{lemma} \label{lem:Nvee1} \samepage
\ifDRAFT {\rm lem:Nvee1}. \fi
For $\alpha \subseteq x$ the following hold:
\begin{itemize}
\item[\rm (i)]
$ \displaystyle
\alpha^\vee
= \sum_{\alpha \subseteq \beta \subseteq x} \beta^\N.
$
\item[\rm (ii)]
$\displaystyle  \alpha^N = \sum_{\alpha \subseteq \beta \subseteq x}
           (-1)^{|\beta|-|\alpha|} \,  \beta^\vee.
$
\end{itemize}
\end{lemma}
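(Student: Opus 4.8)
The plan is to establish (i) directly from the definitions by partitioning a set of vertices according to its intersection with $x$, and then to deduce (ii) from (i) by Möbius inversion on the Boolean lattice of subsets of $x$ (equivalently, by inclusion--exclusion).

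First I would prove (i). Fix $\alpha \subseteq x$ and consider the set $Y = \{y \in X \mid \alpha \subseteq y\}$, so that $\alpha^\vee = \widehat{Y}$. For each $y \in Y$ the intersection $\beta = y \cap x$ is a subset of $x$, and since $\alpha \subseteq y$ and $\alpha \subseteq x$ we have $\alpha \subseteq \beta$. Conversely, any $y$ with $y \cap x = \beta$ for some $\beta$ satisfying $\alpha \subseteq \beta \subseteq x$ lies in $Y$, because $\alpha \subseteq \beta = y \cap x \subseteq y$. Since each vertex $y$ determines $y \cap x$ uniquely, the set $Y$ is the disjoint union of the sets $\{y \in X \mid y \cap x = \beta\}$ taken over all $\beta$ with $\alpha \subseteq \beta \subseteq x$. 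Passing to characteristic vectors, and using that a disjoint union corresponds to a sum of characteristic vectors, yields $\alpha^\vee = \sum_{\alpha \subseteq \beta \subseteq x} \beta^\N$, which is (i).

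For (ii) I would substitute (i) into the proposed right-hand side and collapse the resulting double sum. Applying (i) to each $\beta^\vee$ appearing in $\sum_{\alpha \subseteq \beta \subseteq x} (-1)^{|\beta|-|\alpha|} \beta^\vee$ and interchanging the order of summation gives
\[
\sum_{\alpha \subseteq \gamma \subseteq x} \Bigl( \sum_{\alpha \subseteq \beta \subseteq \gamma} (-1)^{|\beta|-|\alpha|} \Bigr) \gamma^\N .
\]
The inner coefficient depends only on $m = |\gamma| - |\alpha|$ and equals $\sum_{k=0}^{m} \binom{m}{k}(-1)^k = (1-1)^m$, which is $1$ when $\gamma = \alpha$ and $0$ otherwise. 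Hence the double sum collapses to $\alpha^\N$, proving (ii).

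The argument is essentially bookkeeping; the only nontrivial point---and the main, if minor, obstacle---is the vanishing of the alternating binomial sum $\sum_{k=0}^{m} \binom{m}{k}(-1)^k$ for $m > 0$, which is immediate from the binomial theorem. Alternatively one may phrase the inversion in (ii) abstractly as Möbius inversion on the subset lattice of $x$, whose Möbius function is $\mu(\alpha,\beta) = (-1)^{|\beta|-|\alpha|}$; I would nevertheless keep the self-contained computation above, since it avoids invoking external machinery and makes the cancellation transparent.
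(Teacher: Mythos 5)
Your proposal is correct and follows essentially the same route as the paper: part (i) by partitioning $\{y \in X \mid \alpha \subseteq y\}$ according to $\beta = y \cap x$, and part (ii) by substituting (i) into the right-hand side, interchanging the order of summation, and collapsing the inner alternating sum via $(1-1)^{|\gamma|-|\alpha|}$. No gaps; the extra detail you give in verifying the partition in (i) is just a more explicit version of the paper's one-line computation.
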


\begin{proof}
(i)
We argue
\begin{align*}
\alpha^\vee &=
    \sum_{\text{\scriptsize $\begin{matrix} y \in X \\ \alpha \subseteq y \end{matrix}$}} \widehat{y}
=
\sum_{\alpha \subseteq \beta \subseteq x}
 \sum_{\text{\scriptsize $\begin{matrix} y \in X \\ y \cap x = \beta \end{matrix}$}} \widehat{y}
=
\sum_{\alpha \subseteq \beta \subseteq x} \beta^\N.
\end{align*}

(ii)
Using (i) we obtain
\begin{align*}
\sum_{\alpha \subseteq \beta \subseteq x}
           (-1)^{|\beta|-|\alpha|} \,  \beta^\vee
&= \sum_{\alpha \subseteq \beta \subseteq x } (-1)^{|\beta| - | \alpha|} \sum_{\beta \subseteq \gamma \subseteq x}
         \gamma^\N
\\
& = \sum_{\alpha \subseteq \gamma \subseteq x}
  \gamma^\N \sum_{\alpha \subseteq \beta \subseteq \gamma} (-1)^{|\beta| - |\alpha|}.
\end{align*}
For $\alpha \subseteq \gamma \subseteq x$ we have
\begin{align*}
\sum_{\alpha \subseteq \beta \subseteq \gamma} (-1)^{|\beta| - |\alpha|}
&=
\sum_{j=0}^{|\gamma| - |\alpha|} \binom{|\gamma| - |\alpha|}{j} (-1)^j
\\
&= (1-1)^{|\gamma| - |\alpha|}
\\
 &=
\begin{cases}
  1 & \text{ if $\gamma = \alpha$},
   \\
  0 & \text{ if $\gamma \neq \alpha$}.
\end{cases}
\end{align*}
The result follows.
\end{proof}

\begin{lemma}   \label{lem:Nvee2}   \samepage
\ifDRAFT {\rm lem:Nvee2}. \fi
For $\alpha \subseteq x$, 
\[
    \alpha^N = E^*_i \alpha^\vee,
\]
where $i = D-|\alpha|$.
\end{lemma}

\begin{proof}
For $y \in X$ and $0 \leq j \leq D$,
we have  $j=D-|x \cap y|$ if and only if $y \in \Gamma_j(x)$.
The result follows from this and  Definition \ref{def:alphaN}.
\end{proof}

By  Lemma \ref{lem:Calpha2},
\begin{equation}
\{\alpha^\N \;|\;\alpha \subseteq x \} \quad
\text{ are linearly independent}.        \label{eq:claim2(i)}
\end{equation}
By Lemma \ref{lem:Nvee2} and  \eqref{eq:claim2(i)},
\begin{equation}
\{\alpha^\vee \;|\;\alpha \subseteq x \} \quad
\text{ are linearly independent}.    \label{eq:claim2(ii)}
\end{equation}

\section{The action of $A$, $A^*$ on the vectors 
$\alpha^\vee$, 
$\alpha^\N$}
\label{sec:action}
\ifDRAFT {\rm sec:actiion}. \fi

We continue to discuss the Johnson graph $J(N,D)$ with the fixed vertex $x$.
Recall the vectors $\alpha^\vee$, $\alpha^\N$  from Definition \ref{def:alphaN}.
In this section we display the action of $A$, $A^*$ on these vectors.

\begin{theorem}   \label{thm:Aalphavee}   \samepage
\ifDRAFT {\rm thm:Aalphavee}. \fi
For  $\alpha \subseteq x$, 
\begin{align}
A \alpha^\vee
&=
 \th_{|\alpha|}  \alpha^\vee
 + (D-|\alpha|+1)
 \sum_
    {\text{ \scriptsize $\begin{matrix}  \beta \subseteq \alpha  \\  |\beta| = |\alpha|-1 \end{matrix}$}}
       \beta^\vee.             \label{eq:Aalphavee}
\end{align}
\end{theorem}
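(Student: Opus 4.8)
The plan is to compute the action of $A$ on $\alpha^\vee$ directly from the definition, by expanding $\alpha^\vee = \sum_{y \supseteq \alpha} \widehat{y}$ and applying $A$ term by term. Since $A$ is the adjacency matrix of $J(N,D)$, we have $A\widehat{y} = \sum_{z \sim y} \widehat{z}$, where $z \sim y$ means $|y \cap z| = D-1$. Thus $A\alpha^\vee = \sum_{z \in X} c_z \widehat{z}$, where $c_z$ counts the vertices $y \supseteq \alpha$ adjacent to $z$. The heart of the proof is to compute $c_z$ as a function of how $z$ sits relative to $\alpha$; the outcome should only depend on $|z \cap \alpha|$, which is either $|\alpha|$ or $|\alpha|-1$ for any $z$ contributing.

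First I would fix $\alpha$ with $|\alpha| = a$ and analyze $c_z$ by cases on $z$. A vertex $y \supseteq \alpha$ adjacent to $z$ is obtained from $z$ (or gives $z$) by a single-element swap. If $z \supseteq \alpha$, then the neighbors $y$ of $z$ that also contain $\alpha$ are those obtained by removing an element of $z \setminus \alpha$ and adding an element of $\Omega \setminus z$; counting these swaps gives a count that should simplify to $\th_a + a = $ the diagonal coefficient. Let me be careful: the coefficient of $z$ itself (when $z \supseteq \alpha$) comes both from neighbors $y \supseteq \alpha$ of $z$ and requires bookkeeping, so I expect the coefficient of $\alpha^\vee$ to emerge as $\th_{|\alpha|}$ after recognizing $\th_a = (D-a)(N-D-a) - a$ from \eqref{eq:thi}. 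If $z \not\supseteq \alpha$ but $|z \cap \alpha| = a-1$, then $z$ is adjacent to some $y \supseteq \alpha$ precisely when $y$ is obtained by adding the missing element of $\alpha$ and removing one element of $z \setminus \alpha$; counting the valid choices yields the factor $D - a + 1 = D - |\alpha| + 1$. Such $z$ contain $\beta := z \cap \alpha$ of size $a-1$, and summing $\widehat{z}$ over all these $z$ reconstitutes $\beta^\vee$ minus corrections — here I would verify the identity $\sum_{z} \widehat{z} = \sum_{\beta \subseteq \alpha,\, |\beta| = a-1} \beta^\vee$ (up to the contribution already absorbed into the diagonal term).

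The main obstacle, and where I would spend the most care, is the combinatorial bookkeeping that disentangles the two contributions to the coefficient of each $\widehat{z}$: a vertex $z \supseteq \alpha$ receives contributions from its $\alpha$-containing neighbors, while the right-hand side also sees $z$ appear inside every $\beta^\vee$ with $\beta \subseteq \alpha$ of size $a-1$ (since such $z \supseteq \beta$). The cleanest route is probably to pass through the $\alpha^\N$ basis. Using Lemma \ref{lem:Nvee2}, $\alpha^\N = E^*_{D-|\alpha|}\alpha^\vee$, and the $E^*_i$ decompose $V$ by subconstituent, so I would compute $A$ on the $\alpha^\N$ first (where each subconstituent structure from Section \ref{sec:Gammai} controls which neighbors land in which subconstituent), then re-express in the $\alpha^\vee$ basis via Lemma \ref{lem:Nvee1}. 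Either way, I would confirm the final coefficients against the intersection numbers $c_i = i^2$, $b_i = (D-i)(N-D-i)$ from Section \ref{sec:Johnson}, since those encode exactly the neighbor counts that produce $\th_{|\alpha|}$ and $D-|\alpha|+1$.

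Alternatively, a slick verification avoids casework entirely: since $\alpha^\vee$ is a sum of characteristic vectors, one can test \eqref{eq:Aalphavee} by pairing both sides against each basis vector $\widehat{z}$ and checking equality of coefficients, reducing everything to the single count ``how many $y \supseteq \alpha$ are adjacent to a fixed $z$.'' I expect this to be the most efficient presentation, and I would organize the final proof around that single counting lemma, split according to the value of $|z \cap \alpha|$.
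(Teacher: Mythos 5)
Your approach is essentially the paper's own: expand $A\alpha^\vee=\sum_{z}c_z\widehat{z}$ and count, for each $z$, the neighbors $y\supseteq\alpha$ of $z$, splitting by whether $z\supseteq\alpha$ or $|z\cap\alpha|=|\alpha|-1$; the paper packages the second case as the set $S_1$ whose characteristic vector is $\sum_{\beta}(\beta^\vee-\alpha^\vee)$, with the counts $(D-|\alpha|)(N-D)$ and $D-|\alpha|+1$ exactly as you describe. One correction to your bookkeeping: the diagonal count is $(D-|\alpha|)(N-D)$, which equals $\th_{|\alpha|}+|\alpha|(D-|\alpha|+1)$ rather than $\th_{|\alpha|}+|\alpha|$, the extra term arising because a vertex $z\supseteq\alpha$ also lies in each of the $|\alpha|$ vectors $\beta^\vee$ on the right-hand side; with that identity the coefficients match and the proposed detour through the $\alpha^\N$ basis is unnecessary.
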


\begin{proof}
Let the set $S_0$ consist of the vertices in $X$ that include $\alpha$.
The characteristic vector of $S_0$ is equal to $\alpha^\vee$.
Let the set $S_1$ consist of the vertices in $X \setminus S_0$
that are adjacent to at least one vertex in $S_0$.
The characteristic vector of $S_1$ is equal to
\[
 \sum_
    {\text{ \scriptsize $\begin{matrix}  \beta \subseteq \alpha  \\  |\beta| = |\alpha|-1 \end{matrix}$}}
    (\beta^\vee - \alpha^\vee).
\]
By combinatorial counting, we find that
each vertex in $S_0$ is adjacent to exactly
$(D-|\alpha|)(N-D)$ vertices in $S_0$.
Also, each vertex in $S_1$ is adjacent to exactly $D-|\alpha|+1$ vertices in $S_0$.
By these comments,
\[
A \alpha^\vee =
 (D-|\alpha|)(N-D) \alpha^\vee
+ (D-|\alpha|+1) 
 \sum_
    {\text{ \scriptsize $\begin{matrix}  \beta \subseteq \alpha  \\  |\beta| = |\alpha|-1 \end{matrix}$}}
      ( \beta^\vee - \alpha^\vee).
\]
By this and \eqref{eq:thi} we obtain  \eqref{eq:Aalphavee}.
\end{proof}

\begin{theorem}   \label{thm:AalphaN}   \samepage
\ifDRAFT {\rm thmAalphaN}. \fi
For $\alpha \subseteq x$,
\begin{align*}
A \alpha^\N 
&=
(D-|\alpha|) (N-2D+|\alpha|) \alpha^\N
\;\; +\;\;  (N-2D+|\alpha|+1 ) 
\sum_
  {\text{\scriptsize 
     $\begin{matrix} \alpha \subseteq \beta \subseteq x \\ |\beta| = | \alpha|+1 \end{matrix}$
  }}
    \beta^\N
\\ & \qquad
+ \sum_{\text{\scriptsize $\begin{matrix} \beta \subseteq x \\ |\beta| = |\alpha|  \\ |\beta \cap \alpha| = |\alpha| -1 \end{matrix}$ }}
     \beta^\N
\;\; + \;\;  (D-|\alpha|+1)
\sum_{\text{\scriptsize $\begin{matrix}  \beta \subseteq \alpha \\ |\beta| = |\alpha| - 1 \end{matrix}$}}
  \beta^N.
\end{align*}
\end{theorem}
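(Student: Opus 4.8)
The plan is to compute $A\alpha^\N$ by a direct combinatorial analysis, classifying the neighbours in $X$ of a vertex $y$ with $y\cap x=\alpha$ according to how the intersection with $x$ changes. I would fix $\alpha\subseteq x$ and recall that $\alpha^\N$ is the characteristic vector of $S=\{y\in X\mid y\cap x=\alpha\}$; applying $A$ gives a vector whose $z$-coordinate counts the number of neighbours $z'$ of $z$ lying in $S$. A vertex $y\in S$ has the form $y=\alpha\sqcup u$ with $u\subseteq\Omega\setminus x$ and $|u|=D-|\alpha|$. Moving to an adjacent vertex $z$ means deleting one element of $y$ and adjoining one element of $\Omega\setminus y$. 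The key step is to sort these moves into four mutually exclusive types according to whether the deleted/added elements lie inside or outside $x$, and to see that each type produces exactly one of the four terms in the claimed formula.

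Concretely I would enumerate the cases for an edge $\{y,z\}$ with $y\in S$. First, delete an element of $u$ (outside $x$) and add an element of $\Omega\setminus y$ that is also outside $x$: then $z\cap x=\alpha$, so $z\in S$ and $z$ contributes to the coefficient of $\alpha^\N$. Second, delete an element of $u$ and add an element of $x\setminus\alpha$: then $z\cap x=\alpha\cup\{\text{that element}\}=\beta$ with $\alpha\subseteq\beta$, $|\beta|=|\alpha|+1$, giving the $\sum_{|\beta|=|\alpha|+1}\beta^\N$ term. Third, delete an element of $\alpha$ and add an element of $x\setminus\alpha$: then $z\cap x=\beta$ with $|\beta|=|\alpha|$ and $|\beta\cap\alpha|=|\alpha|-1$, giving the third sum. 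Fourth, delete an element of $\alpha$ and add an element outside $x$: then $z\cap x=\beta\subseteq\alpha$ with $|\beta|=|\alpha|-1$, giving the last term. In each case I would count the multiplicities: the number of available elements to delete ($|u|=D-|\alpha|$ or $|\alpha|$) times the number of available elements to add ($N-D-|u|=N-2D+|\alpha|$ outside $x$, or $D-|\alpha|$ inside $x\setminus\alpha$), being careful that the diagonal coefficient $(D-|\alpha|)(N-2D+|\alpha|)$ is the count of neighbours staying in $S$, while the off-diagonal coefficients come from counting, for a fixed target $z$, how many of its neighbours lie in $S$.

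The subtle point is the direction of the count. For the diagonal term it is immediate that each $y\in S$ has $(D-|\alpha|)(N-2D+|\alpha|)$ neighbours in $S$ (choose one of the $D-|\alpha|$ elements of $u$ to drop and one of the $N-D-(D-|\alpha|)=N-2D+|\alpha|$ elements of $\Omega\setminus x$ not already in $y$ to add). For an off-diagonal target $z$ with $z\cap x=\beta$, I must instead count the neighbours of $z$ lying in $S$; this flips the roles of the add/delete counts. For the $|\beta|=|\alpha|+1$ term the neighbour-count is $N-2D+|\alpha|+1$, matching the coefficient; for the third sum it is $1$ (the element of $\alpha\setminus\beta$ is uniquely determined and must be swapped back for the unique element of $\beta\setminus\alpha$); for the last term the count is $D-|\alpha|+1$. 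The main obstacle is simply keeping the bookkeeping of these eight numbers consistent — verifying that the coefficient attached to each target family equals the number of its neighbours in $S$ rather than the number of neighbours of a source in that family — and confirming, via the eigenvalue formula \eqref{eq:thi}, that the diagonal coefficient is recorded correctly. An alternative route that avoids some of this casework would be to derive $A\alpha^\N$ from Theorem \ref{thm:Aalphavee} together with the transition formulas in Lemma \ref{lem:Nvee1}, expanding $\alpha^\N$ in the $\beta^\vee$ basis, applying the known action of $A$, and re-expanding; I would keep this inclusion–exclusion computation in reserve as a consistency check on the combinatorial count.
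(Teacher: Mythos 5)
Your proposal is correct and follows essentially the same route as the paper: the paper also writes $A\alpha^\N=\sum_{z\in X}e_z\widehat{z}$ with $e_z$ the number of neighbours of $z$ in $\{y\in X\mid y\cap x=\alpha\}$, and tabulates the same four cases for $\beta=z\cap x$ with the same coefficients. Your explicit attention to the direction of the count (neighbours of the target $z$ lying in $S$, not neighbours of a source) is exactly the bookkeeping the paper's table encodes.
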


\begin{proof}
The vector  $\alpha^\N$ is the characteristic vector of the set
\begin{equation}
 \{ y \in X \;|\; y \cap x = \alpha\}.    \label{eq:Lamalpha}
\end{equation}
So we have
\[
 A \alpha^N = \sum_{z \in X}  e_z \widehat{z},
\]
where $e_z$ is the number of edges from $z$ into the set \eqref{eq:Lamalpha}.
We now compute $e_z$ for $z \in X$.
Let $z$ be given and define $\beta = z \cap x$.
By combinatorial counting, we find that $e_z=0$
or
$e_z$ is given in the table below.
\[
\begin{array}{c|c}
\text{Case}  & e_z
\\ \hline
\beta=\alpha & (D-|\alpha|)(N-2D+|\alpha|)      \rule{0mm}{3.2ex}
\\
\text{$\alpha \subseteq \beta$ and $|\beta| = |\alpha| + 1$} & N-2D + |\alpha| + 1   \rule{0mm}{3ex}
\\
\text{$|\beta| = |\alpha|$ and $|\beta \cap \alpha| = |\alpha| -1$} & 1                  \rule{0mm}{3ex}
\\
\text{$\beta \subseteq \alpha$ and $|\beta| = |\alpha| -1$}  & D-|\alpha| + 1       \rule{0mm}{3ex}
\end{array}
\]
Using the data in the table, we routinely obtain the result.
\end{proof}

\begin{theorem}  \label{thm:AsalphaN}   \samepage
\ifDRAFT {\rm thm:AsalphaN}. \fi
For $\alpha \subseteq x$, 
\[
A^* \alpha^\N = \th^*_{D-|\alpha|} \alpha^\N.
\]
\end{theorem}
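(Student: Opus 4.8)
The plan is to exploit the fact that $A^*$ acts as a scalar on each subconstituent, so the whole statement reduces to identifying which subconstituent contains the support of $\alpha^\N$. Recall from Section \ref{sec:DRG} that $A^* = \sum_{j=0}^D \th^*_j E^*_j$ and that $E^*_i E^*_j = \delta_{i,j} E^*_i$, so that $A^* E^*_j = \th^*_j E^*_j$ for $0 \leq j \leq D$. Consequently, if a vector $v \in V$ satisfies $v \in E^*_j V$ for some $j$, then $A^* v = \th^*_j v$. Thus the entire task is to show that $\alpha^\N$ lies in $E^*_{D-|\alpha|} V$.

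Next I would establish this membership, and here there is nothing to grind through: it is exactly the content of Lemma \ref{lem:Nvee2}, which states $\alpha^\N = E^*_i \alpha^\vee$ with $i = D - |\alpha|$. In particular $\alpha^\N \in E^*_{D-|\alpha|} V$, as desired. Alternatively, one can see the membership directly from Definition \ref{def:alphaN}: every vertex $y$ appearing in $\alpha^\N$ satisfies $y \cap x = \alpha$, hence $|x \cap y| = |\alpha|$ and so $\partial(x,y) = D - |x \cap y| = D - |\alpha|$ by the distance formula for $J(N,D)$; therefore each such $y$ lies in $\Gamma_{D-|\alpha|}(x)$, which says precisely that $E^*_{D-|\alpha|} \alpha^\N = \alpha^\N$.

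Finally I would combine these two observations. Writing $i = D - |\alpha|$, we have
\[
A^* \alpha^\N = A^* E^*_i \alpha^\vee = \th^*_i E^*_i \alpha^\vee = \th^*_i \alpha^\N = \th^*_{D-|\alpha|}\,\alpha^\N,
\]
using Lemma \ref{lem:Nvee2} in the first and third equalities and $A^* E^*_i = \th^*_i E^*_i$ in the second. There is no genuine obstacle here: unlike the preceding results on the action of $A$ (Theorems \ref{thm:Aalphavee} and \ref{thm:AalphaN}), which require combinatorial edge-counting, the present statement is immediate once one notes that $\alpha^\N$ is homogeneous with respect to the decomposition $V = \sum_j E^*_j V$. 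The only point worth stating carefully is the identification of the correct eigenvalue index $D - |\alpha|$, which is forced by the distance computation above.
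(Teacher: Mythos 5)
Your proposal is correct and takes exactly the same route as the paper, whose entire proof is the observation that $\alpha^\N \in E^*_i V$ with $i = D-|\alpha|$; you merely spell out the details (the distance formula $\partial(x,y)=D-|x\cap y|$ and the identity $A^*E^*_i = \th^*_i E^*_i$) that the paper leaves implicit. No further comment is needed.
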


\begin{proof}
Clear since $\alpha^\N \in E^*_i V$, where $i=D-|\alpha|$.
\end{proof}

\begin{theorem}    \label{thm:Asalphsvee}   \samepage
\ifDRAFT {\rm thm:Asalphavee}. \fi
For $\alpha \subseteq x$,
\[
A^* \alpha^\vee =
  \th^*_{D-|\alpha|}  \alpha^\vee
 + \frac{N(N-1)}{D(N-D)} 
 \sum_{\text{\scriptsize $\begin{matrix} \alpha \subseteq \gamma \subseteq x \\ |\gamma| = |\alpha| + 1 \end{matrix}$}}
    \gamma^\vee.
\]
\end{theorem}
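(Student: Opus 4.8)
The plan is to expand $\alpha^\vee$ in the $\beta^\N$ basis, apply $A^*$ termwise using Theorem \ref{thm:AsalphaN}, and then re-expand the result back in the $\gamma^\vee$ basis. By Lemma \ref{lem:Nvee1}(i) we have $\alpha^\vee = \sum_{\alpha \subseteq \beta \subseteq x} \beta^\N$, and by Theorem \ref{thm:AsalphaN} the vector $\beta^\N$ is an eigenvector of $A^*$ with eigenvalue $\th^*_{D-|\beta|}$. Hence $A^*\alpha^\vee = \sum_{\alpha \subseteq \beta \subseteq x} \th^*_{D-|\beta|}\,\beta^\N$. The key computational point is that the dual eigenvalue \eqref{eq:thsi} is an \emph{affine} (degree-one) function of the index: writing $c = N(N-1)/(D(N-D))$ we have $\th^*_j = N-1 - cj$, so $\th^*_{D-|\beta|} = N-1 - c(D-|\beta|) = \th^*_{D-|\alpha|} + c(|\beta|-|\alpha|)$. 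This linearity in $|\beta|$ is exactly what will make the sum collapse cleanly.

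Next I would substitute this expression to obtain
\[
A^*\alpha^\vee
= \th^*_{D-|\alpha|} \sum_{\alpha \subseteq \beta \subseteq x} \beta^\N
 \;+\; c \sum_{\alpha \subseteq \beta \subseteq x} (|\beta|-|\alpha|)\,\beta^\N.
\]
The first sum is just $\alpha^\vee$ again by Lemma \ref{lem:Nvee1}(i), giving the $\th^*_{D-|\alpha|}\alpha^\vee$ term directly. So the whole theorem reduces to the identity
\[
c \sum_{\alpha \subseteq \beta \subseteq x} (|\beta|-|\alpha|)\,\beta^\N
= c \sum_{\text{\scriptsize $\begin{matrix} \alpha \subseteq \gamma \subseteq x \\ |\gamma| = |\alpha|+1 \end{matrix}$}} \gamma^\vee,
\]
which, after canceling $c$, is a purely combinatorial statement about the $\beta^\N$ and $\gamma^\vee$ bases and does not involve $A^*$ at all.

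The heart of the argument, and the step I expect to be the main obstacle, is proving this combinatorial identity. I would attack it by expanding the right-hand side in the $\beta^\N$ basis using Lemma \ref{lem:Nvee1}(i) once more: $\gamma^\vee = \sum_{\gamma \subseteq \beta \subseteq x}\beta^\N$, so
\[
\sum_{\text{\scriptsize $\begin{matrix} \alpha \subseteq \gamma \subseteq x \\ |\gamma|=|\alpha|+1 \end{matrix}$}} \gamma^\vee
= \sum_{\alpha \subseteq \beta \subseteq x} \beta^\N
  \cdot \#\{\gamma : \alpha \subseteq \gamma \subseteq \beta,\; |\gamma|=|\alpha|+1\}.
\]
Since the $\beta^\N$ are linearly independent by \eqref{eq:claim2(i)}, it suffices to match coefficients of each $\beta^\N$. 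For fixed $\beta$ with $\alpha \subseteq \beta \subseteq x$, a qualifying $\gamma$ is obtained by adjoining to $\alpha$ exactly one element of $\beta \setminus \alpha$, so the count is precisely $|\beta \setminus \alpha| = |\beta|-|\alpha|$. This matches the coefficient of $\beta^\N$ on the left-hand side, completing the identity and hence the theorem. The only real care needed is bookkeeping the nested sums and confirming the single-element-adjunction count, both of which are routine once the affine structure of $\th^*_j$ has been exploited to separate the constant and linear parts.
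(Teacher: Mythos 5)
Your proposal is correct, and its second half takes a genuinely different route from the paper. Both arguments begin identically: expand $\alpha^\vee = \sum_{\alpha \subseteq \beta \subseteq x} \beta^\N$ via Lemma \ref{lem:Nvee1}(i) and apply Theorem \ref{thm:AsalphaN} termwise to get $A^*\alpha^\vee = \sum_{\alpha \subseteq \beta \subseteq x} \th^*_{D-|\beta|}\,\beta^\N$. The paper then converts back to the $\gamma^\vee$ basis by M\"obius inversion (Lemma \ref{lem:Nvee1}(ii)) and is left to evaluate the alternating sum $\sum_{\alpha \subseteq \beta \subseteq \gamma} (-1)^{|\gamma|-|\beta|}\th^*_{D-|\beta|}$, which is a finite difference of order $|\gamma|-|\alpha|$ of the dual eigenvalue sequence; it vanishes for order at least $2$ precisely because $\th^*_j$ is affine in $j$, though the paper leaves this as a computation "using \eqref{eq:thsi}". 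You instead make that affine structure explicit at the outset, writing $\th^*_{D-|\beta|} = \th^*_{D-|\alpha|} + c(|\beta|-|\alpha|)$ with $c = N(N-1)/(D(N-D))$, which peels off the $\th^*_{D-|\alpha|}\alpha^\vee$ term immediately and reduces the theorem to the sign-free identity $\sum_{\beta}(|\beta|-|\alpha|)\beta^\N = \sum_{|\gamma|=|\alpha|+1}\gamma^\vee$; your coefficient count ($|\beta|-|\alpha|$ choices of a one-element extension of $\alpha$ inside $\beta$) is correct, and linear independence of the $\beta^\N$ from \eqref{eq:claim2(i)} is not even needed since you are verifying an identity rather than deducing one. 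What your approach buys is conceptual transparency — it isolates exactly which property of $\th^*$ makes the answer involve only $|\gamma|-|\alpha| \le 1$, namely degree one in the index — at the cost of not being the generic template (inversion plus brute-force evaluation) that the paper also uses for Theorem \ref{thm:Aalphavee} and elsewhere. Either writeup is acceptable.
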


\begin{proof}
Using  Lemma \ref{lem:Nvee1}(i), Theorem \ref{thm:AsalphaN}, and Lemma \ref{lem:Nvee1}(ii) in order,
\begin{align*}
A^* \alpha^\vee &=
  \sum_{\alpha \subseteq \beta \subseteq x} A^* \beta^\N.
\\
&= \sum_{\alpha \subseteq \beta \subseteq x} \th^*_{D-|\beta|} \beta^\N
\\
&= \sum_{\alpha \subseteq \beta \subseteq x} \th^*_{D-|\beta|} 
    \sum_{\beta \subseteq \gamma \subseteq x} (-1)^{|\gamma| - |\beta|} \gamma^\vee.
\end{align*}
Thus
\begin{equation}
A^* \alpha^\vee 
= \sum_{\alpha \subseteq \gamma \subseteq x}
   \gamma^\vee
    \sum_{\alpha \subseteq \beta \subseteq \gamma}
     (-1)^{|\gamma| - |\beta|} \th^*_{D-|\beta|}.
       \label{eq:rhs1}
\end{equation}
For $\alpha \subseteq \beta \subseteq \gamma$ we have
\begin{align*}
\sum_{\alpha \subseteq \beta \subseteq \gamma} (-1)^{|\gamma| - |\beta|} \th^*_{D-|\beta|}
&=
\sum_{i=|\alpha|}^{|\gamma|}  (-1)^{|\gamma| - i} \binom{|\gamma| - {\alpha}}{ i - |\alpha|} \th^*_{D-i}
\\
&=
\sum_{j=0}^{|\gamma|- |\alpha|}  (-1)^{|\gamma| - |\alpha|-j} \binom{  |\gamma|-|\alpha| } {j} \th^*_{D-|\alpha|-j}.
\end{align*}
Evaluating this using  \eqref{eq:thsi} we obtain
\[
\sum_{\alpha \subseteq \beta \subseteq \gamma} (-1)^{|\gamma| - |\beta|} \th^*_{D-|\beta|}
 = 
 \begin{cases}
    0 & \text{ if $|\gamma|- |\alpha| \geq 2$},
\\
 \frac{N(N-1)}{D(N-D)}  & \text{ if $|\gamma| - |\alpha| =1$},
\\
  \th^*_{D-|\alpha|} & \text{ if $\gamma =\alpha$}
\end{cases}
\]
By this and \eqref{eq:rhs1} we get the result.
\end{proof}

\section{Some results concerning the vectors $\alpha^\vee$ and $\alpha^\N$}
\label{sec:alphavee}
\ifDRAFT {\rm sec:alphavee}. \fi

We continue to discuss the Johnson graph $J(N, D)$ with the fixed vertex $x$.
Recall the vector $\alpha^\vee$ and $\alpha^\N$  from Definition \ref{def:alphaN}.
In this section we show that the vector $\alpha^\vee$ and $\alpha^\N$ are contained in the nucleus
of $J(N,D)$ with respect to $x$.

The following result is known \cite{Tanaka}.
We give a short direct proof for the sake of completeness.

\begin{lemma}   {\rm (See \cite[Example (7.1)]{Tanaka}.) }
 \label{lem:MiErV}   \samepage
\ifDRAFT {\rm lem:MiErV}. \fi
For $\alpha\subseteq x$, 
\[
 \alpha^\vee \in E_0 V + E_1 V + \cdots + E_{|\alpha|} V.
\]
\end{lemma}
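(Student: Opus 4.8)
The plan is to proceed by induction on $|\alpha|$, using the action of $A$ on $\alpha^\vee$ computed in Theorem \ref{thm:Aalphavee} as the engine of the recursion. For the base case $|\alpha|=0$ we have $\alpha=\emptyset$, and $\emptyset^\vee = \widehat{X}$ is the all-ones vector; since $E_0 = |X|^{-1}J$, this vector spans $E_0 V$, so $\emptyset^\vee \in E_0 V$ as required. For the inductive step I would fix $\alpha \subseteq x$ with $i = |\alpha| \geq 1$ and assume the claim holds for every subset of $x$ of size strictly less than $i$.

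The key step is to rewrite Theorem \ref{thm:Aalphavee} in the form
\[
(A - \th_i I)\,\alpha^\vee
 = (D-i+1) \sum_{\substack{\beta \subseteq \alpha \\ |\beta| = i-1}} \beta^\vee .
\]
By the induction hypothesis each summand $\beta^\vee$ on the right lies in $E_0 V + E_1 V + \cdots + E_{i-1} V$, hence so does the entire right-hand side. Next I would decompose $\alpha^\vee = \sum_{r=0}^D E_r \alpha^\vee$ according to the direct sum \eqref{eq:sumEiV}, and apply $A - \th_i I$ using $A E_r = \th_r E_r$ to obtain
\[
(A - \th_i I)\,\alpha^\vee = \sum_{r=0}^D (\th_r - \th_i)\, E_r \alpha^\vee .
\]
Since the left-hand side lies in $E_0 V + \cdots + E_{i-1} V$ and the sum \eqref{eq:sumEiV} is direct, the $E_r$-component must vanish for every $r \geq i$, that is, $(\th_r - \th_i)\, E_r \alpha^\vee = 0$. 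For $r = i$ this is automatic, while for $r > i$ the factor $\th_r - \th_i$ is nonzero, forcing $E_r \alpha^\vee = 0$. Thus $\alpha^\vee = \sum_{r=0}^i E_r \alpha^\vee \in E_0 V + E_1 V + \cdots + E_i V$, which closes the induction.

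The only point requiring care is the last deduction: the recurrence alone gives vanishing of the $E_r$-components only for $r \geq i$, and to promote this to $E_r \alpha^\vee = 0$ for $r > i$ I need the eigenvalues $\{\th_r\}_{r=0}^D$ to be mutually distinct. This holds because $\Gamma$ has $D+1$ primitive idempotents $\{E_r\}_{r=0}^D$ with distinct associated eigenvalues, so $\th_r \neq \th_i$ whenever $r \neq i$. This distinctness is what lets the single relation from Theorem \ref{thm:Aalphavee} control all the higher eigenspace components at once, and it is the real content of the argument; everything else is the bookkeeping of the eigenspace decomposition.
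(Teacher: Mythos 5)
Your proof is correct and takes essentially the same approach as the paper: induction on $|\alpha|$ driven by the recursion $(A-\th_{|\alpha|}I)\alpha^\vee \in \mathrm{Span}\{\beta^\vee : \beta\subseteq\alpha,\ |\beta|=|\alpha|-1\}$ from Theorem \ref{thm:Aalphavee}. The only cosmetic difference is that the paper packages the induction as the polynomial identity $\tau_{|\alpha|+1}(A)\alpha^\vee=0$ and then invokes the formula \eqref{eq:Er} for $E_r$, whereas you extract the eigencomponents directly from the direct sum \eqref{eq:sumEiV} and the distinctness of the $\th_r$; both routes are valid and equivalent.
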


\begin{proof}
We may assume that $\alpha \neq x$;
otherwise the result is trivial.
We have $|\alpha| < D$.
By Theorem  \ref{thm:Aalphavee}, for $\gamma \subseteq x$ we have
\[
  (A-\th_{|\gamma|} I)\, \gamma^\vee
  \in \text{\rm Span} \{\beta^\vee \;|\;  \beta \subseteq \gamma, \; |\beta|=|\gamma|-1 \}.
\]
Using this and induction on $|\alpha|$,  we obtain
\[
\tau_{|\alpha|+1} (A) \alpha^\vee = 0.
\]
By this and \eqref{eq:Er},
\[
  E_r \alpha^\vee = 0
   \qquad\qquad (|\alpha| < r \leq D).
\]
The result follows.
\end{proof}

\begin{lemma}   \label{lem:MiEsrV}   \samepage
\ifDRAFT {\rm lem:MiEsrV}. \fi
For $\alpha\subseteq x$, 
\[
 \alpha^\vee \in
  E^*_0 V + E^*_1 V + \cdots + E^*_{D-|\alpha|} V.
\]
\end{lemma}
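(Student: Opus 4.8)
The plan is to reduce everything to the two relations already established in Section~\ref{sec:alphaN}. First I would invoke Lemma~\ref{lem:Nvee1}(i) to write
\[
 \alpha^\vee = \sum_{\alpha \subseteq \beta \subseteq x} \beta^\N,
\]
which expresses $\alpha^\vee$ as a sum of the vectors $\beta^\N$ over all subsets $\beta$ of $x$ containing $\alpha$. The point is that each summand lives in a single dual eigenspace, so it suffices to control the indices that appear.

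Next I would apply Lemma~\ref{lem:Nvee2}, which gives $\beta^\N = E^*_{D-|\beta|}\,\beta^\vee$, and hence $\beta^\N \in E^*_{D-|\beta|} V$ for every $\beta \subseteq x$. (Equivalently, $\beta^\N$ is the characteristic vector of the vertices $y$ with $y \cap x = \beta$, and each such $y$ satisfies $\partial(x,y) = D - |\beta|$, so $\widehat{y} \in E^*_{D-|\beta|} V$.) Now in the sum above we have $\alpha \subseteq \beta$, so $|\beta| \geq |\alpha|$, and therefore $D - |\beta| \leq D - |\alpha|$. Consequently
\[
 \beta^\N \in E^*_{D-|\beta|} V \subseteq E^*_0 V + E^*_1 V + \cdots + E^*_{D-|\alpha|} V
\]
for every $\beta$ occurring in the sum.

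Assembling the terms, $\alpha^\vee$ is a $\C$-linear combination of vectors each lying in $E^*_0 V + \cdots + E^*_{D-|\alpha|} V$, so $\alpha^\vee$ itself lies there, which is the desired conclusion. There is no real obstacle here: the result is essentially a bookkeeping consequence of the fact that every vertex $y$ containing $\alpha$ has $|x \cap y| \geq |\alpha|$, hence $\partial(x,y) \leq D - |\alpha|$; the only care needed is to track the inequality $D - |\beta| \leq D - |\alpha|$ correctly. This lemma is the dual counterpart of Lemma~\ref{lem:MiErV}, and together the two will place $\alpha^\vee$ in the intersection $\N_{D-|\alpha|}$ in the next section.
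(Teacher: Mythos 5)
Your proof is correct and rests on the same observation as the paper's: every vertex $y$ containing $\alpha$ satisfies $|x\cap y|\ge|\alpha|$, hence $\partial(x,y)\le D-|\alpha|$, so each constituent of $\alpha^\vee$ lies in $E^*_jV$ for some $j\le D-|\alpha|$. The paper argues directly with the vectors $\widehat{y}$ rather than grouping them into the $\beta^\N$ via Lemma~\ref{lem:Nvee1}(i), but this is only a cosmetic difference.
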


\begin{proof}
For $y \in X$ we have $\widehat{y} \in E^*_i V$, where $i=D-|y \cap x|$.
By Definition \ref{def:alphaN},
\[
\alpha^\vee = 
\sum_{
    \text{  \scriptsize 
                  $\begin{matrix} y \in X \\ \alpha \subseteq y \end{matrix}$
   }} 
\widehat{y}.
\]
By these comments we get the result.
\end{proof}

\begin{lemma}   \label{lem:claim1}   \samepage
\ifDRAFT {\rm lem:claim1}. \fi
For  $\alpha \subseteq x$,
\begin{equation}
\alpha^\vee \in \N_{D-|\alpha|}.    \label{eq:claim1}
\end{equation}
\end{lemma}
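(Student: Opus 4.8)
The plan is to derive \eqref{eq:claim1} by directly intersecting the two containments established in the preceding lemmas, after unwinding the definition of $\N_{D-|\alpha|}$. First I would set $i = D - |\alpha|$ and apply Definition \ref{def:Ni}. Since $D - i = D - (D-|\alpha|) = |\alpha|$, this gives
\[
\N_{D-|\alpha|} = (E^*_0 V + E^*_1 V + \cdots + E^*_{D-|\alpha|} V) \cap (E_0 V + E_1 V + \cdots + E_{|\alpha|} V).
\]
The whole point of the proof is to recognize that the two factors on the right are exactly the subspaces that Lemmas \ref{lem:MiEsrV} and \ref{lem:MiErV} place $\alpha^\vee$ into.

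Next I would invoke the two lemmas in turn. Lemma \ref{lem:MiEsrV} asserts that $\alpha^\vee$ lies in the dual factor $E^*_0 V + \cdots + E^*_{D-|\alpha|} V$, and Lemma \ref{lem:MiErV} asserts that $\alpha^\vee$ lies in the primal factor $E_0 V + \cdots + E_{|\alpha|} V$. Hence $\alpha^\vee$ belongs to the intersection of these two subspaces, which is precisely $\N_{D-|\alpha|}$ by the display above. This establishes \eqref{eq:claim1}.

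There is no substantive obstacle here: all the genuine work has already been carried out in Lemmas \ref{lem:MiErV} and \ref{lem:MiEsrV}, so this result merely records that the two separate containments combine to place $\alpha^\vee$ in the intersection defining the relevant $\N_i$. The only step demanding care is the index bookkeeping, namely confirming that the dual-side cutoff $D-|\alpha|$ and the primal-side cutoff $|\alpha|$ are the matching pair dictated by the relation $D - i = |\alpha|$ in Definition \ref{def:Ni}; once this is verified the conclusion is immediate.
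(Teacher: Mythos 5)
Your proposal is correct and matches the paper's proof exactly: the paper also obtains \eqref{eq:claim1} by combining Definition \ref{def:Ni} with Lemmas \ref{lem:MiErV} and \ref{lem:MiEsrV}, merely stating this in one line where you spell out the index bookkeeping. Nothing further is needed.
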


\begin{proof}
By Definition \ref{def:Ni} and Lemmas \ref{lem:MiErV}, \ref{lem:MiEsrV}.
\end{proof}

\begin{lemma}      \label{lem:claim1b}   \samepage
\ifDRAFT {\em lem:claim1b}.  \fi
For $\alpha \subseteq x$,
\[
   \alpha^\N \in E^*_{D-|\alpha|} \N.
\]
\end{lemma}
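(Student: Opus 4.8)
The plan is to deduce this directly from two results already established, namely Lemma~\ref{lem:claim1} and Lemma~\ref{lem:Nvee2}, with essentially no new computation required. The key observation is that the membership $\alpha^\N \in E^*_{D-|\alpha|}\N$ asks only that $\alpha^\N$ lie in the image $E^*_{D-|\alpha|}\N = \{E^*_{D-|\alpha|}w \mid w \in \N\}$, and Lemma~\ref{lem:Nvee2} already exhibits $\alpha^\N$ as $E^*_{D-|\alpha|}$ applied to the specific vector $\alpha^\vee$. So the entire task reduces to checking that $\alpha^\vee$ itself belongs to $\N$.

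First I would record that $\alpha^\vee \in \N$. This is immediate from Lemma~\ref{lem:claim1}, which gives $\alpha^\vee \in \N_{D-|\alpha|}$, combined with Definition~\ref{def:N}, which expresses $\N = \N_0 + \N_1 + \cdots + \N_D$; in particular $\N_{D-|\alpha|} \subseteq \N$, so $\alpha^\vee \in \N$. Next I would invoke Lemma~\ref{lem:Nvee2}, which states $\alpha^\N = E^*_{D-|\alpha|}\alpha^\vee$ (writing $i = D-|\alpha|$ there). Combining the two facts, $\alpha^\N = E^*_{D-|\alpha|}\alpha^\vee$ is the image under $E^*_{D-|\alpha|}$ of a vector lying in $\N$, and hence $\alpha^\N \in E^*_{D-|\alpha|}\N$, as desired.

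In terms of difficulty, there is no real obstacle at this stage: the statement is a short corollary, and all the genuine work has already been done in proving that $\alpha^\vee$ sits in the correct summand $\N_{D-|\alpha|}$ of the nucleus (Lemma~\ref{lem:claim1}, which itself rested on Lemmas~\ref{lem:MiErV} and~\ref{lem:MiEsrV} controlling where $\alpha^\vee$ lives relative to the $E_j V$ and $E^*_j V$ filtrations). One point worth stating explicitly, though it is not an obstacle, is that we do not need the full strength of $\N$ being a $T$-module here; we only need $\alpha^\vee \in \N$ and the defining description of $E^*_{D-|\alpha|}\N$ as an image, so the proof can be kept to two lines.
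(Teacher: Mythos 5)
Your proposal is correct and matches the paper's own proof, which likewise deduces the statement directly from Lemma \ref{lem:Nvee2} and Lemma \ref{lem:claim1}. You have merely spelled out the one-line citation in more detail; the reasoning is identical.
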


\begin{proof}
By Lemmas \ref{lem:Nvee2} and \ref{lem:claim1}.
\end{proof}

\section{Some bases for the nucleus}
\label{sec:main}
\ifDRAFT {\rm sec:main}. \fi

We continue to discuss the Johnson graph $J(N, D)$ with the fixed vertex $x$.
In this section we obtain some bases for the nucleus of with respect to $x$.
Let $W$ denote an irreducible $T$-module,
with endpoint $r$, dual endpoint $s$, and diameter $d$.
By \cite[Section 2.3]{IL}, the isomorphism class of $W$ is determined by 
$r$, $t$, $d$.
Assume that $W$ has displacement $0$.
Then by Lemma \ref{lem:disp0}, $r=t$ and $0 \leq r \leq D/2$ and $d=D-2r$.
In this case the isomorphism class of $W$ is determined by $r$.
Abbreviate
\[
\text{mult}_r = \text{mult} (W).
\]

\begin{lemma}   {\rm (See \cite[Theorem 10.2]{Go}, \cite[Remark 2]{IL}.) }
\label{lem:multr}   \samepage
\ifDRAFT {\rm lem:multr}. \fi
We have $\text{\rm mult}_0 = 1$ and
\[
\text{\rm mult}_r
 = 
  \begin{pmatrix} D \\ r  \end{pmatrix}
   -  \begin{pmatrix} D \\ r - 1 \end{pmatrix}
  \qquad\qquad (1 \leq r \leq D/2).
\]
\end{lemma}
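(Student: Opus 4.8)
The plan is to reduce the computation to counting how each displacement-$0$ irreducible $T$-module meets the subconstituents, and then to pin down the relevant dimensions. First I would record the shape of the modules in play: by Lemma \ref{lem:disp0a} every irreducible $T$-submodule $W$ of $\N$ is thin with dual endpoint $t=r$ and diameter $d=D-2r$, where $r$ is its endpoint and $0\le r\le D/2$. Since its isomorphism class is determined by $r$, the integer $\text{mult}_r$ is well defined and $\N$ contains exactly $\text{mult}_r$ mutually isomorphic copies of this module. By Lemma \ref{lem:shape} together with thinness, such a $W$ satisfies $\dim E^*_i W = 1$ for $r\le i\le r+d = D-r$, while $E^*_i W = 0$ outside this range (this vanishing is the definition of endpoint and diameter). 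Summing over the constituents of $\N$ gives, for $0\le i\le D$,
\[
 \dim E^*_i \N = \sum_{r=0}^{\min(i,\,D-i)} \text{mult}_r .
\]
Because $\N$ is a $T$-module we have the direct sum $\N=\sum_{i=0}^D E^*_i\N$, so it suffices to show $\dim E^*_i\N = \binom{D}{i}$: specializing the displayed identity to $i\le D/2$ yields the lower-triangular unipotent system $\sum_{r=0}^{i}\text{mult}_r = \binom{D}{i}$, whose solution is $\text{mult}_0 = 1$ and $\text{mult}_r = \binom{D}{r}-\binom{D}{r-1}$.

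Next I would supply the lower bounds for free from the material already in hand. By Lemma \ref{lem:claim1b} the vectors $\{\alpha^\N \mid \alpha\subseteq x,\ |\alpha|=D-i\}$ lie in $E^*_i\N$, and by \eqref{eq:claim2(i)} they are linearly independent; as there are $\binom{D}{i}$ of them, $\dim E^*_i\N \ge \binom{D}{i}$. Summing over $i$ and using $\sum_i\binom{D}{i}=2^D$ gives $\dim\N\ge 2^D$, which is also visible directly from the $2^D$ linearly independent vectors $\{\alpha^\vee\}_{\alpha\subseteq x}$ of Lemma \ref{lem:claim1} and \eqref{eq:claim2(ii)}. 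Note that at this stage I only decompose $\N$, whose pieces are thin by Lemma \ref{lem:disp0a}, so I never need general thinness of $T$.

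The crux, and the step I expect to be the main obstacle, is the matching upper bound $\dim\N\le 2^D$ (equivalently $\dim E^*_i\N\le\binom{D}{i}$ for every $i$): once this is known, every inequality above collapses to equality and the triangular system closes. I would obtain it from the complete decomposition of the standard module of $J(N,D)$ into irreducible $T$-modules, which is the representation-theoretic content of \cite[Theorem 10.2]{Go} and \cite[Remark 2]{IL}; there the displacement-$0$ multiplicity with endpoint $r$ is identified with $\dim S^{(D-r,r)} = \binom{D}{r}-\binom{D}{r-1}$, the dimension of the Specht module for the two-row partition $(D-r,r)$ of $D$, governed by the branching $S_N\downarrow S_D\times S_{N-D}$ at the fixed vertex $x$. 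The consistency check $\sum_{r=0}^{\lfloor D/2\rfloor}\bigl(\binom{D}{r}-\binom{D}{r-1}\bigr)(D-2r+1)=2^D$ is exactly the $\mathfrak{sl}_2$ Clebsch--Gordan decomposition of $(\C^2)^{\otimes D}$. Supplying this upper bound from scratch is the only genuinely nontrivial input; the remaining steps are the bookkeeping above. Crucially, taking the bound from \cite{Go}, \cite{IL} rather than from the basis statements (i)--(iv) avoids any circularity, since those basis results are established later in this section using the present lemma.
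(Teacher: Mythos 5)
The paper does not prove this lemma at all: it is stated as a known result, cited directly from \cite[Theorem 10.2]{Go} and \cite[Remark 2]{IL}, and the next lemma (\ref{lem:dimEsiN}) then \emph{uses} it to compute $\dim E^*_i\N$. Your proposal runs this logic in reverse: you set up the (correct) triangular system $\dim E^*_i\N=\sum_{r=0}^{\min(i,D-i)}\mathrm{mult}_r$, supply the lower bound $\dim E^*_i\N\ge\binom{D}{i}$ from the vectors $\alpha^\N$ via Lemma \ref{lem:claim1b} and \eqref{eq:claim2(i)}, and observe that a matching upper bound would force $\mathrm{mult}_r=\binom{D}{r}-\binom{D}{r-1}$. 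All of that bookkeeping is sound, and it is essentially the content of the paper's Lemmas \ref{lem:dimEsiN} and \ref{lem:claim3} with the direction of inference reversed.

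The problem is that your decisive step --- the upper bound $\dim E^*_i\N\le\binom{D}{i}$ --- is not proved but imported from the very references \cite{Go}, \cite{IL} whose content \emph{is} the multiplicity formula (the Specht-module identification you describe is precisely \cite[Remark 2]{IL}). So your argument is not an independent proof; it is the same citation as the paper's, wrapped in a reduction that the paper performs separately and in the opposite direction. This is not a mathematical error, but you should be clear that you have not actually proved anything the citation does not already give you, and that your reduction duplicates work the paper does later anyway. If you could establish $\dim\N\le 2^D$ (equivalently $\dim E^*_i\N\le\binom{D}{i}$) by an independent route --- say from the displacement decomposition of \cite{T:disp} or from Tanaka's width/dual-width machinery \cite{Tanaka}, without invoking the known decomposition of the standard module --- then your scheme would upgrade to a genuinely self-contained proof of the lemma, which would be a real improvement over a bare citation. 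As written, it is not.
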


\begin{lemma}   \label{lem:dimEsiN}   \samepage
\ifDRAFT {\rm lem:dimEsiN}. \fi
For $0 \leq i  \leq D$,
\begin{equation}
\text{\rm dim} E^*_i \N =
 \begin{pmatrix} D \\ i \end{pmatrix}.           \label{eq:dimEsiN}
\end{equation}
\end{lemma}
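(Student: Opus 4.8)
The plan is to decompose the nucleus $\N$ into irreducible $T$-modules and count the contribution of each to $E^*_i \N$. By Lemma \ref{lem:Vdecomp} I would write $\N$ as a direct sum $\N = \sum_h W_h$ of irreducible $T$-modules. By Lemma \ref{lem:disp0a}, each summand $W_h$ is thin, with endpoint equal to its dual endpoint, say $r$, where $0 \le r \le D/2$, and with diameter $d = D - 2r$. As noted just above Lemma \ref{lem:multr}, the isomorphism class of such a module is determined by $r$, so the number of summands $W_h$ having endpoint $r$ equals $\text{mult}_r$.

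Next I would compute $\dim E^*_i W$ for a single irreducible summand $W$ of endpoint $r$ and diameter $d = D-2r$. Since $W$ is thin, its shape is identically $1$, so by Lemma \ref{lem:shape} and Definition \ref{def:thin} we have $\dim E^*_j W = 1$ for $r \le j \le r+d$, while $E^*_j W = 0$ for $j < r$ or $j > r+d$ by \cite[Lemma 5.4]{T:nuc}. Because $r + d = D - r$, this means $\dim E^*_i W = 1$ exactly when $r \le i \le D-r$, and $\dim E^*_i W = 0$ otherwise. The condition $r \le i \le D-r$ is equivalent to $0 \le r \le \min(i, D-i)$, and note $\min(i,D-i) \le D/2$ automatically.

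Summing over the summands and grouping them by endpoint, I obtain $\dim E^*_i \N = \sum_{r=0}^{\min(i, D-i)} \text{mult}_r$. Finally I would evaluate this sum using Lemma \ref{lem:multr}. Writing $m = \min(i, D-i)$, and using $\text{mult}_0 = \binom{D}{0}$ together with $\text{mult}_r = \binom{D}{r} - \binom{D}{r-1}$ for $1 \le r \le m$ (which applies since $m \le D/2$), the sum telescopes to $\binom{D}{m}$. Since $\binom{D}{i} = \binom{D}{D-i}$, we have $\binom{D}{m} = \binom{D}{i}$, giving the claim.

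The argument is essentially a bookkeeping computation, so there is no deep obstacle; the step requiring the most care is correctly identifying the range of $i$ for which a thin module of endpoint $r$ contributes to $E^*_i \N$, and checking that the summation index $r$ never exceeds $D/2$, so that the multiplicity formula of Lemma \ref{lem:multr} is valid throughout. Once the range $0 \le r \le \min(i,D-i)$ is pinned down, the telescoping is immediate.
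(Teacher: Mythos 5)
Your proof is correct and follows essentially the same route as the paper: decompose $\N$ into displacement-zero irreducibles, use thinness to see that a module of endpoint $r$ contributes $1$ to $\dim E^*_i\N$ precisely for $r\le i\le D-r$, group by endpoint via $\text{mult}_r$, and telescope. The only cosmetic difference is that the paper first reduces to $0\le i\le D/2$ using the symmetry $\dim E^*_i\N=\dim E^*_{D-i}\N$, whereas you absorb both cases at once by writing the summation bound as $\min(i,D-i)$.
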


\begin{proof}
By Lemmas \ref{lem:Vdecomp} and \ref{lem:disp0a},  
there exist irreducible $T$-modules $\{W_h\}_{h=1}^m$ with displacement $0$ such that
\[
 \N = \sum_{h=1}^m W_h
\qquad\qquad  \text{(direct sum)}.
\]
So
\[
  \dim E^*_i \N = \sum_{h=1}^m \dim E^*_i W_h.
\]
For $1 \leq h \leq m$ we have  $\dim E^*_i W_h = \dim E^*_{D-i} W_h$ by Lemmas \ref{lem:shape} and \ref{lem:disp0a}.
So $\dim E^*_i \N = \dim E^*_{D-i} \N$.
By this  we may assume $0 \leq i \leq D/2$.
For $1 \leq h \leq m$ let $r_h$ denote the endpoint of $W_h$.
By Lemma \ref{lem:disp0} and the definition of the endpoint, we have
\[
 \dim E^*_i W_h = 
 \begin{cases}
    0   &  \text{ if $i < r_h$},
\\
   1    &   \text{ if $r_h \leq i$}
 \end{cases}
\qquad\qquad (1 \leq h \leq m).
\]
For $0 \leq r \leq i$, the number of $h$ $(1 \leq h \leq m)$ such that
$r_h = r$ is equal to $\text{mult}_r$.
By these comments,
\[
  \dim E^*_i \N = \sum_{r=0}^i \text{mult}_r.
\]
Evaluating this sum using  Lemma \ref{lem:multr},
we routinely obtain \eqref{eq:dimEsiN}.
\end{proof}

\begin{lemma}   \label{lem:claim3}   \samepage
\ifDRAFT {\rm lem:claim3}. \fi
We have
\begin{itemize}
\item[\rm (i)]
$\dim \N = 2^D$;
\item[\rm (ii)]
$\dim \N_i = \text{\scriptsize $\begin{pmatrix} D  \\  i \end{pmatrix}$}$  for $0 \leq i \leq D$.
\end{itemize}
\end{lemma}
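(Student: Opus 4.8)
The plan is to compute both quantities directly from the already-established dimension formula for $E^*_i \N$ in Lemma \ref{lem:dimEsiN}, together with the direct-sum decompositions of $\N$ supplied by the nucleus machinery. The key structural facts I would invoke are that $\N = \sum_{i=0}^D E^*_i \N$ is a direct sum (this is the decomposition of $\N$ into its intersections with the dual-primitive-idempotent eigenspaces) and that $\N = \sum_{i=0}^D \N_i$ is a direct sum by Lemma \ref{lem:Ndirect}.

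For part (i), I would write $\dim \N = \sum_{i=0}^D \dim E^*_i \N$, using the directness of the sum $\N = \sum_{i=0}^D E^*_i \N$. By Lemma \ref{lem:dimEsiN} each summand equals $\binom{D}{i}$, so
\[
\dim \N = \sum_{i=0}^D \binom{D}{i} = 2^D
\]
by the binomial theorem applied to $(1+1)^D$. This is immediate once the directness of the $E^*_i \N$ decomposition is in hand.

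For part (ii), I would relate $\N_i$ to $E^*_{D-i} \N$. The cleanest route uses the thinness and shape symmetry of the nucleus: by Lemma \ref{lem:disp0a} every irreducible $T$-submodule $W$ of $\N$ is thin with endpoint $r$, dual endpoint $t=r$, and diameter $d=D-2r$. Decomposing $\N = \sum_{h=1}^m W_h$ into irreducibles as in the proof of Lemma \ref{lem:dimEsiN}, I would check that $\dim \N_i = \sum_h \dim(\N_i \cap W_h)$, and that for a single thin irreducible $W_h$ with endpoint $r_h$ the intersection $\N_i \cap W_h$ is one-dimensional exactly when $r_h \le i$ and zero otherwise — this is the same indicator behavior that drove the $E^*$ computation, because for a thin module of displacement $0$ the space $\N_i$ meets $W_h$ in precisely the one-dimensional piece $E^*_i W_h \cap (E_0 V + \cdots + E_{D-i}V)$, which is nonzero iff $E^*_i W_h \neq 0$. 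Summing over $h$ and using $\sum_{r=0}^i \text{mult}_r = \binom{D}{i}$ (exactly as evaluated in Lemma \ref{lem:dimEsiN}) yields $\dim \N_i = \binom{D}{i}$.

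The main obstacle is establishing cleanly that for each thin irreducible summand $W_h$ the intersection $\dim(\N_i \cap W_h)$ has the claimed indicator value; this requires tracking how the flag condition defining $\N_i$ (namely lying in $E^*_0 V + \cdots + E^*_i V$ simultaneously with $E_0 V + \cdots + E_{D-i} V$) cuts down a thin module of endpoint $r_h$ and diameter $D - 2r_h$. An alternative that sidesteps the per-module intersection analysis entirely: since Lemma \ref{lem:claim1} already places $\alpha^\vee \in \N_{D-|\alpha|}$ and the vectors $\{\alpha^\vee\}_{\alpha \subseteq x}$ are linearly independent by \eqref{eq:claim2(ii)}, one gets $\dim \N_i \ge \#\{\alpha \subseteq x : |\alpha| = D-i\} = \binom{D}{i}$; combined with directness of $\N = \sum_i \N_i$ and the total dimension $\dim \N = 2^D = \sum_i \binom{D}{i}$ from part (i), equality is forced in every $\N_i$ simultaneously, giving $\dim \N_i = \binom{D}{i}$ without any module-by-module argument. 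I would favor this second approach as the main line, since it reuses results already proved and avoids the delicate intersection computation.
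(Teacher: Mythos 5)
Your favored line of argument is correct and is essentially the paper's own proof: part (i) sums $\dim E^*_i\N = \binom{D}{i}$ over the direct sum $\N = \sum_i E^*_i\N$, and part (ii) combines the lower bound $\dim\N_i \ge \binom{D}{i}$ (from $\alpha^\vee \in \N_{D-|\alpha|}$ and the linear independence in \eqref{eq:claim2(ii)}) with the directness of $\N = \sum_i \N_i$ and the total $2^D$ to force equality. You were right to prefer this over the per-module intersection analysis, which is more delicate and unnecessary here.
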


\begin{proof}
 (i)
We have $\N = \sum_{i=0}^D E^*_i \N$ (direct sum).
By this and Lemma \ref{lem:dimEsiN}, we get the result.

(ii)
By Lemma \ref{lem:Ndirect}, 
$\N = \sum_{i=0}^D \N_i$ (direct sum).
By this and (i),
\[
2^D = \dim \N = \sum_{i=0}^D \dim \N_i.
\]
By \eqref{eq:claim2(ii)} and \eqref{eq:claim1},
\[
  \dim \N_i \geq \begin{pmatrix} D \\ i \end{pmatrix}
 \qquad\qquad (0 \leq i \leq D).
\]
By these comments we get the result.
\end{proof}

\begin{theorem}   \label{thm:main}   \samepage
\ifDRAFT {\rm thm:main}. \fi
The following hold:
\begin{itemize}
\item[\rm (i)]
The vectors
$\{\alpha^\N\}_{\alpha \subseteq x}$ form a basis of $\N$.
\item[\rm (ii)]
The vectors
$\{\alpha^\vee\}_{\alpha \subseteq x}$ form a basis of $\N$.
\item[\rm (iii)]
For $0 \leq i \leq D$,
the vectors
$\{\alpha^\N \;|\;  \alpha \subseteq x, \; |\alpha|=D-i\}$
form a basis of $E^*_i {\mathcal N}$.
\item[\rm (iv)]
For $0 \leq i \leq D$,
the vectors
$\{\alpha^\vee \;|\;  \alpha  \subseteq x, \; |\alpha|=D-i\}$
form a basis of ${\mathcal N}_i$.
\end{itemize}
\end{theorem}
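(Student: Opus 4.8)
The plan is to assemble Theorem \ref{thm:main} from the dimension count in Lemma \ref{lem:claim3} together with the linear independence and containment facts established in the preceding sections. The key numerical inputs are already in place: $\dim \N = 2^D$ by Lemma \ref{lem:claim3}(i), $\dim \N_i = \binom{D}{i}$ by Lemma \ref{lem:claim3}(ii), and $\dim E^*_i \N = \binom{D}{i}$ by Lemma \ref{lem:dimEsiN}. On the other side, we know from \eqref{eq:claim2(i)} and \eqref{eq:claim2(ii)} that the full families $\{\alpha^\N\}_{\alpha \subseteq x}$ and $\{\alpha^\vee\}_{\alpha \subseteq x}$ are each linearly independent, and each family has exactly $2^D$ members since $\alpha$ ranges over all subsets of the $D$-element set $x$. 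So the entire argument is a matching of sizes against dimensions.

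First I would prove (iv). By Lemma \ref{lem:claim1} every vector $\alpha^\vee$ with $|\alpha| = D-i$ lies in $\N_i$, and the vectors in the subfamily $\{\alpha^\vee \mid \alpha \subseteq x,\ |\alpha| = D-i\}$ are linearly independent by \eqref{eq:claim2(ii)}. This subfamily has $\binom{D}{D-i} = \binom{D}{i}$ members, which by Lemma \ref{lem:claim3}(ii) equals $\dim \N_i$. Hence these vectors form a basis of $\N_i$, proving (iv). Next I would prove (iii) in the identical fashion: by Lemma \ref{lem:claim1b} each $\alpha^\N$ with $|\alpha| = D-i$ lies in $E^*_i \N$, these vectors are independent by \eqref{eq:claim2(i)}, there are $\binom{D}{i}$ of them, and this matches $\dim E^*_i \N$ from Lemma \ref{lem:dimEsiN}. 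So they form a basis of $E^*_i \N$, giving (iii).

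Then (i) and (ii) follow by summing over $i$. Since the sum $\N = \sum_{i=0}^D \N_i$ is direct (Lemma \ref{lem:Ndirect}), the bases from (iv) for the individual summands concatenate to a basis of $\N$; their union is exactly $\{\alpha^\vee\}_{\alpha \subseteq x}$ (as $\alpha$ ranges over all subsets, partitioned by cardinality), which proves (ii). Likewise, since $\N = \sum_{i=0}^D E^*_i \N$ is direct, the bases from (iii) concatenate to a basis of $\N$, whose union is $\{\alpha^\N\}_{\alpha \subseteq x}$, proving (i). Alternatively, (i) and (ii) can be obtained directly: each full family is independent with $2^D$ vectors all lying in $\N$ (by Lemmas \ref{lem:claim1} and \ref{lem:claim1b}), and $\dim \N = 2^D$ by Lemma \ref{lem:claim3}(i).

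I do not anticipate a genuine obstacle here, since all the hard work—the dimension formulas via $T$-module multiplicities, the containments $\alpha^\vee \in \N_{D-|\alpha|}$ and $\alpha^\N \in E^*_{D-|\alpha|}\N$, and the linear independence—has already been carried out in the earlier lemmas. The only point requiring mild care is bookkeeping: confirming that the disjoint union over $i$ of the cardinality-$(D-i)$ subfamilies recovers the complete indexing set $\{\alpha : \alpha \subseteq x\}$, and that the directness of the two decompositions of $\N$ legitimately lets one glue the per-summand bases into a global basis. Both are immediate, so the proof is essentially a citation-and-count argument with no new computation.
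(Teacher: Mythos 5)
Your proposal is correct and follows essentially the same route as the paper: containment via Lemmas \ref{lem:claim1} and \ref{lem:claim1b}, linear independence via \eqref{eq:claim2(i)} and \eqref{eq:claim2(ii)}, and a dimension count against Lemmas \ref{lem:dimEsiN} and \ref{lem:claim3}. The only cosmetic difference is the order (the paper proves (i), (ii) directly rather than by concatenating the bases from (iii), (iv), which is the alternative you also mention).
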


\begin{proof}
(i)
By Lemma \ref{lem:claim1b}, $\alpha^\N \in \N$.
Now the result follows from  \eqref{eq:claim2(i)} and Lemma \ref{lem:claim3}(i).

(ii)
By Lemma \ref{lem:claim1}, $\alpha^\vee \in \N$.
Now the result follows from \eqref{eq:claim2(ii)} and Lemma \ref{lem:claim3}(i).

(iii)
By Lemma \ref{lem:claim1b}, $\alpha^\N \in E^*_i \N$ if $|\alpha| = D-i$.
Now the result follows from
Lemma \ref{lem:dimEsiN} and \eqref{eq:claim2(i)}.

(iv)
By Lemma \ref{lem:claim1}, $\alpha^\vee \in \N_i$ if $|\alpha| = D-i$.
Now the result follows from
\eqref{eq:claim2(ii)} and Lemma \ref{lem:claim3}(ii).
\end{proof}

Recall by Lemma \ref{lem:natural} that
for $0 \leq i \leq D$ the subgraph $\Gamma_i(x)$ is isomorphic to
$J(D,i) \times J(N-D,i)$.

\begin{corollary}   \label{cor:main}   \samepage
\ifDRAFT {\rm cor:main}. \fi
For $0 \leq i \leq D$,
consider a new graph obtained from the subgraph $\Gamma_i(x)$ by
removing the edges that come from $J(D,i)$.
Then the vectors in Theorem \ref{thm:main}{\rm (iii)} are the characteristic vectors 
of the connected components of the new graph.
\end{corollary}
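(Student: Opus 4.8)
The plan is to recognize that Corollary \ref{cor:main} is essentially a bookkeeping statement that combines two facts already established in the paper, so the proof should be very short. The key observation is that the combinatorial description of the connected components was already carried out in Section \ref{sec:Gammai}, while the linear-algebraic basis claim was established in Section \ref{sec:main}; the corollary merely records that these two descriptions refer to the same objects.

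First I would recall from Lemma \ref{lem:Calpha2} that, for the new graph obtained from $\Gamma_i(x)$ by deleting the edges coming from $J(D,i)$, the connected components are exactly the sets
\[
  \{ y \in X \;|\; y \cap x = \alpha \}, \qquad\qquad \alpha \subseteq x, \;\; |\alpha| = D-i.
\]
Next I would invoke Definition \ref{def:alphaN}, according to which $\alpha^\N$ is by definition the characteristic vector of the set $\{ y \in X \;|\; y \cap x = \alpha \}$. Combining these two observations, the characteristic vector of each connected component of the new graph is precisely $\alpha^\N$ for the corresponding $\alpha \subseteq x$ with $|\alpha| = D-i$, and as $\alpha$ ranges over all such subsets we obtain exactly the vectors listed in Theorem \ref{thm:main}(iii).

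Finally I would appeal to Theorem \ref{thm:main}(iii), which asserts that these same vectors $\{\alpha^\N \;|\; \alpha \subseteq x, \; |\alpha| = D-i\}$ form a basis of $E^*_i \N$. Putting the pieces together shows that the basis vectors for $E^*_i \N$ furnished by Theorem \ref{thm:main}(iii) are literally the characteristic vectors of the connected components of the new graph, which is the assertion of the corollary. There is no genuine obstacle to overcome here: all the substantive work—both the identification of the connected components and the proof that the associated characteristic vectors span $E^*_i \N$—has already been done, and the corollary is a one-line synthesis of Lemma \ref{lem:Calpha2}, Definition \ref{def:alphaN}, and Theorem \ref{thm:main}(iii).
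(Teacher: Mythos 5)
Your proposal is correct and matches the paper's own proof essentially verbatim: both combine Lemma \ref{lem:Calpha2} (identifying the connected components as the sets $\{y \in X \;|\; y \cap x = \alpha\}$ with $|\alpha| = D-i$) with the fact from Definition \ref{def:alphaN} that $\alpha^\N$ is the characteristic vector of exactly that set. No gaps.
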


\begin{proof}
By Lemma \ref{lem:Calpha2} the sets
\[
  \{y \in X \;|\; y \cap x =\alpha\},
  \qquad\qquad \alpha\subseteq x, \; |\alpha| = D-i
\]
are the connected components of the new graph.
Recall  that $\alpha^\N$ is the characteristic vector of
$\{y \in X \;|\; y \cap x = \alpha\}$.
By these comments, we get the result.
\end{proof}

{

}

\bigskip\bigskip\noindent
Kazumasa Nomura\\
Institute of Science Tokyo\\
Kohnodai, Ichikawa, 272-0827 Japan\\
email: knomura@pop11.odn.ne.jp

\bigskip\noindent
Paul Terwilliger\\
Department of Mathematics\\
University of Wisconsin\\
480 Lincoln Drive\\ 
Madison, Wisconsin, 53706 USA\\
email: terwilli@math.wisc.edu

\bigskip\noindent
{\bf Keywords.}
Johnson graph,
distance-regular graph,
$Q$-polynomial.
nucleus.
\\
\noindent
{\bf 2020 Mathematics Subject Classification.} 
05E30,  15B10.

\end{document}